\newtheorem{thm}{Theorem}[section]
\newtheorem{defn}[thm]{Definition}
\newtheorem{cor}[thm]{Corollary}
\newtheorem{obs}[thm]{Observation}
\newtheorem{lem}[thm]{Lemma}
\newtheorem{prop}{Property}
\long\def\symbolfootnote[#1]#2{\begingroup%
\def\thefootnote{\fnsymbol{footnote}}\footnote[#1]{#2}\endgroup}
\begin{document}

\title{An infinite class of unsaturated rooted trees corresponding to designable RNA secondary structures}
\author{Jonathan Jedwab \and Tara Petrie \and Samuel Simon}
\date{23 September 2017 (revised 8 May 2020)}
\maketitle

\symbolfootnote[0]{
Department of Mathematics, 
Simon Fraser University, 8888 University Drive, Burnaby BC V5A 1S6, Canada.
\par
J.~Jedwab and T. Petrie are supported by NSERC.
\par
Email: {\tt jed@sfu.ca}, {\tt tpetrie@sfu.ca}, {\tt ssimon@sfu.ca}
\par
The results of this paper form part of the Master's thesis of T. Petrie \cite{petrie-masters}, who presented them in part at the CanaDAM 2017 conference in Toronto, ON.
}

\begin{abstract}
An RNA secondary structure is designable if there is an RNA sequence which can attain its maximum number of base pairs only by adopting that structure.
The combinatorial RNA design problem, introduced by Hale{\v{s}} et al.\ in 2016, is to determine whether or not a given RNA secondary structure is designable.
Hale{\v{s}} et al.\ identified certain classes of designable and non-designable secondary structures by reference to their corresponding rooted trees.
We introduce an infinite class of rooted trees containing unpaired nucleotides at the greatest depth, and prove constructively that their corresponding secondary structures are designable. This complements previous results for the combinatorial RNA design problem.
\end{abstract}

{\bf Keywords} Combinatorial RNA design problem; secondary structure; nucleotide; unsaturated tree 

\section{Introduction}
\label{sec:intro}

Ribonucleic acid (RNA) is a biomolecule which performs many roles in cellular organisms, including conveying genetic information, controlling protein synthesis, and catalyzing biological reactions. A strand of RNA comprises a chain of nucleotides, each of which is one of the nitrogenous bases guanine (G), cytosine (C), adenine~(A), and uracil~(U).
The three-dimensional spatial configuration of an RNA strand affects its biological function~\cite{esmaili2014evolutionary}.  
A first approximation to this configuration is given by the \emph{secondary structure}, namely the two-dimensional folding of the RNA strand onto itself under only the interaction of certain pairs of nucleotides in the sequences to form \emph{base pairs}, as illustrated in Figure~\ref{secondarypic}.
The three allowable base pair types are $\left\{{\text{G,C}}\right\}$, $\left\{{\text{A,U}}\right\}$, and (less frequently) $\left\{{\text{G,U}}\right\}$.

\begin{figure}[h]
\begin{center}
\includegraphics[width=8cm]{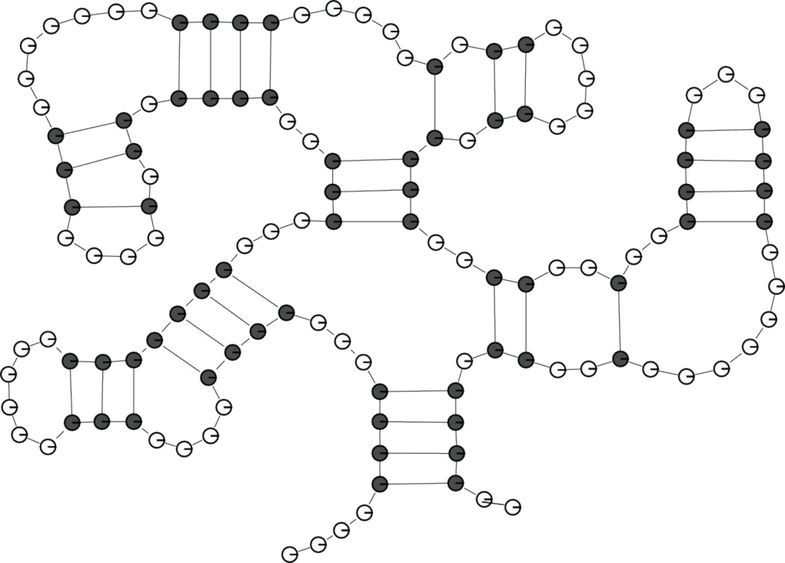}
\end{center}
\caption[RNA secondary structure]{An RNA secondary structure in which base pairs are represented by grey points. Reproduced from \cite{secondarypicture} under Creative Commons Attribution License.}
\label{secondarypic}
\end{figure}

Given a nucleotide sequence, the stability of the RNA secondary structure increases with the number of base pairs. 
Nussinov and Jacobson~\cite{nussinov1980fast} showed that, given an RNA sequence of $n$ nucleotides, a secondary structure with the maximum number of base pairs can be predicted in order $n^3$ time.
Modern algorithms for secondary structure prediction are presented in \cite{zuker1981optimal, mathews1999expanded}, for example, and are widely used in current software.

The inverse problem to secondary structure prediction is the \emph{RNA design problem}: find an RNA sequence which adopts a target secondary structure in preference to all other secondary structures, under some specified energy model.
There is an extensive literature on computational approaches to the RNA design problem as an optimization problem: see, for example, 
\cite{busch2006info, aguirre2007computational, taneda2011modena, zadeh2011nucleic, lyngso2012frnakenstein, garcia-martin2013, esmaili2014evolutionary, bonnet2017}.
Nonetheless, there is currently no known exact polynomial-time algorithm for solving the RNA design problem, and its complexity remains open~\cite{halevs2016combinatorial}; a more general problem is known to be NP-hard~\cite{schnall2008inverting}.

In view of these difficulties, Hale{\v{s}} et al. \cite{halevs2016combinatorial} proposed the \emph{combinatorial RNA design problem} as an idealized version of the RNA design problem: 
given a target secondary structure, find an RNA sequence which can achieve its maximum number of base pairs only by adopting the specified secondary structure, or else show that no such sequence exists.
This restricted problem is intended to be sufficiently tractable to allow algorithmic insights that could apply to more sophisticated models.
Hale{\v{s}} et al. \cite{halevs2016combinatorial} considered a number of energy models under which minimizing the total energy score for an RNA sequence corresponds to maximizing a weighted sum of base pair types.
In the \emph{Nussinov-Jacobson model}, the three base pair types \{G,C\}, \{A,U\}, \{G,U\} are assigned an energy score $\alpha$, $\beta$, $\gamma$, respectively, where $\max(\alpha,\beta) < \gamma < 0$; all other base pair types are assigned an energy score of $\infty$ in order to exclude them.
The \emph{Watson-Crick} model takes $\alpha = \beta = -1$ and $\gamma=\infty$, so that minimizing the total energy score is the same as maximizing the total number of \{G,C\} and \{A,U\} base pairs.
We shall use the Watson-Crick model throughout this paper (except that in Section~\ref{sec:results} we extend our main result to the Nussinov-Jacobson model with $\alpha = \beta = \gamma = -1$, which corresponds to maximizing the total number of \{G,C\} and \{A,U\} and \{G,U\} base pairs).

We point out that the immediate practical implications of our results are limited in two important respects.
Firstly, the class of ``P-unsaturated floral trees'' that we study (see Section~\ref{sec:results}) is highly restricted: it constrains multi-loops to have exactly three branches and to have no unpaired bases; it disallows internal loops and bulges; and it requires all unpaired bases in hairpin loops to occur at the same nesting depth with respect to the exterior loop.
Secondly, models for which minimization of the energy score is the same as maximization of the total number of base pairs have generally been found to have poor predictive ability, particularly for longer RNA sequences. This suggests it is unlikely that an RNA sequence identified by our results as a design for a target secondary structure would in practice adopt that secondary structure in preference to all other secondary structures. In Section~\ref{sec:future}, we propose as a topic for future study to attempt to extend the methods of this paper to provide predictions of greater practical relevance.

\section{Terminology and previous results}
\label{sec:terminology}

In this section we define some terminology and then summarize the main results of~\cite{halevs2016combinatorial}.
Consider the sequence of nucleotides
\begin{center}
\begin{tikzpicture}
\node at (1,1) {A};
\node at (1.5,1) {G};
\node at (2,1) {A};
\node at (2.5,1) {C};
\node at (3,1) {C};
\node at (3.5,1) {U};
\node at (4,1) {U};
\node at (4.5,1) {C};
\node at (5,1) {C};
\node at (5.5,1) {A};
\node at (6,1) {C};
\node at (6.5,1) {C};
\node at (7,1) {A};
\node at (7.5,1) {C};
\node at (8,1) {C};
\node at (8.5,1) {U};
\node at (9,1) {U};
\node at (9.5,1) {C};
\node at (10,1) {C};
\node at (10.5,1) {C};
\node at (11,1) {A};
\node at (11.5,1) {G};
\node at (12,1) {U.};
\end{tikzpicture}
\end{center}
\noindent 
This sequence admits a secondary structure involving 7 base pairs of types \{G,C\} and \{A,U\}, represented on the left side of Figure~\ref{mainproblem} by arcs that are all drawn above the nucleotide sequence.
\begin{figure}
\begin{center}
\begin{tikzpicture}[scale=0.7]
\node at (1,1) {A};
\node at (1.5,1) {G};
\node at (2,1) {A};
\node at (2.5,1) {C};
\node at (3,1) {C};
\node at (3.5,1) {U};
\node at (4,1) {U};
\node at (4.5,1) {C};
\node at (5,1) {C};
\node at (5.5,1) {A};
\node at (6,1) {C};
\node at (6.5,1) {C};
\node at (7,1) {A};
\node at (7.5,1) {C};
\node at (8,1) {C};
\node at (8.5,1) {U};
\node at (9,1) {U};
\node at (9.5,1) {C};
\node at (10,1) {C};
\node at (10.5,1) {C};
\node at (11,1) {A};
\node at (11.5,1) {G};
\node at (12,1) {U};
\draw[bend left=65] (1,1.25) to (12,1.25);
\draw[bend left=65] (1.5,1.25) to (6,1.25);
\draw[bend left=65] (6.5,1.25) to (11.5,1.25);
\draw[bend left=65] (2,1.25) to (3.5,1.25);
\draw[bend left=65] (4,1.25) to (5.5,1.25);
\draw[bend left=65] (7,1.25) to (8.5,1.25);
\draw[bend left=65] (9,1.25) to (11,1.25);
\end{tikzpicture}\hspace{0.5cm}
\begin{tikzpicture}[scale=0.4]
\node[scale=0.7, draw, circle](n21) at (1.5,5)  {C};
\node[scale=0.7, draw, circle](n22) at (5.5,5)  {C};
\node[scale=0.7, draw, circle](n23) at (9.5,5)  {C};
\node[scale=0.7, draw, circle](n24) at (13,5)  {C}; %
\node[scale=0.7, draw, circle](n25) at (3.5,5)  {C};
\node[scale=0.7, draw, circle](n26) at (7.5,5)  {C};
\node[scale=0.7, draw, circle](n27) at (11.5,5)  {C};
\node[scale=0.7, draw, circle](n28) at (16,5)  {C};
\node[scale=0.7, draw, circle](n29) at (14.5,5)  {C};
\node[scale=0.7, draw, circle](n31) at (2.5,7)  {AU};
\node[scale=0.7, draw, circle](n32) at (6.5,7)  {UA};
\node[scale=0.7, draw, circle](n33) at (10.5,7)  {AU};
\node[scale=0.7, draw, circle](n34) at (14.5,7)  {UA};
\node[scale=0.7, draw, circle](n41) at (4.5,9)  {GC};
\node[scale=0.7, draw, circle](n42) at (12.5,9)  {CG};
\node[scale=0.7, draw, circle](n51) at (8.5,11)  {AU};

\foreach \from/\to in {n51/n41, n51/n42, n41/n31, n41/n32, n42/n33, n42/n34, n31/n21, n32/n22, n33/n23, n34/n24,  n25/n31, n26/n32, n27/n33, n28/n34, n34/n29}
\draw (\from) -- (\to);
\end{tikzpicture}
\end{center}
\caption{Labelled secondary structure (left) and corresponding labelled tree representation (right)}
\label{mainproblem}
\end{figure}
The right side of Figure~\ref{mainproblem} shows the corresponding representation of this labelled secondary structure in labelled tree form. We map from the labelled tree to the labelled secondary structure by visiting tree vertices in the order given by 
recursively visiting the root, and then (if the root is an internal vertex) the vertices of the left subtree, the vertices of the right subtree, and the root again; and then joining by an arc each pair of nucleotides that label the same tree vertex. We map in the reverse direction by representing the outermost arc as the tree root AU, the two arcs nested directly beneath the outermost arc as a left child GC and a right child CG of the tree root AU, and so on.
In this way we associate each labelled secondary structure (a set of nucleotides and base pairs) with a labelled tree.

In Figure~\ref{mainproblemnolabels}, we have removed the nucleotide labels to leave the underlying secondary structure and the corresponding tree representation.
\begin{figure}
\begin{center}
\begin{tikzpicture}[scale=0.7]
\node at (1,1) {$\bullet$};
\node at (1.5,1) {$\bullet$};
\node at (2,1) {$\bullet$};
\node at (2.5,1) {$\bullet$};
\node at (3,1) {$\bullet$};
\node at (3.5,1) {$\bullet$};
\node at (4,1) {$\bullet$};
\node at (4.5,1) {$\bullet$};
\node at (5,1) {$\bullet$};
\node at (5.5,1) {$\bullet$};
\node at (6,1) {$\bullet$};
\node at (6.5,1) {$\bullet$};
\node at (7,1) {$\bullet$};
\node at (7.5,1) {$\bullet$};
\node at (8,1) {$\bullet$};
\node at (8.5,1) {$\bullet$};
\node at (9,1) {$\bullet$};
\node at (9.5,1) {$\bullet$};
\node at (10,1) {$\bullet$};
\node at (10.5,1) {$\bullet$};
\node at (11,1) {$\bullet$};
\node at (11.5,1) {$\bullet$};
\node at (12,1) {$\bullet$};
\draw[bend left=65] (1,1.25) to (12,1.25);
\draw[bend left=65] (1.5,1.25) to (6,1.25);
\draw[bend left=65] (6.5,1.25) to (11.5,1.25);
\draw[bend left=65] (2,1.25) to (3.5,1.25);
\draw[bend left=65] (4,1.25) to (5.5,1.25);
\draw[bend left=65] (7,1.25) to (8.5,1.25);
\draw[bend left=65] (9,1.25) to (11,1.25);
\end{tikzpicture}\hspace{0.5cm}
\begin{tikzpicture}[scale=0.4]
\node[scale=0.7, draw, circle](n21) at (1.5,5)  {$\bullet$};
\node[scale=0.7, draw, circle](n22) at (5.5,5)  {$\bullet$};
\node[scale=0.7, draw, circle](n23) at (9.5,5)  {$\bullet$};
\node[scale=0.7, draw, circle](n24) at (13,5)  {$\bullet$}; 
\node[scale=0.7, draw, circle](n25) at (3.5,5)  {$\bullet$};
\node[scale=0.7, draw, circle](n26) at (7.5,5)  {$\bullet$};
\node[scale=0.7, draw, circle](n27) at (11.5,5)  {$\bullet$};
\node[scale=0.7, draw, circle](n28) at (16,5)  {$\bullet$};
\node[scale=0.7, draw, circle](n29) at (14.5,5)  {$\bullet$};
\node[scale=0.7, draw, circle](n31) at (2.5,7)  {$\bullet \bullet$};
\node[scale=0.7, draw, circle](n32) at (6.5,7)  {$\bullet \bullet$};
\node[scale=0.7, draw, circle](n33) at (10.5,7)  {$\bullet \bullet$};
\node[scale=0.7, draw, circle](n34) at (14.5,7)  {$\bullet \bullet$};
\node[scale=0.7, draw, circle](n41) at (4.5,9)  {$\bullet \bullet$};
\node[scale=0.7, draw, circle](n42) at (12.5,9)  {$\bullet \bullet$};
\node[scale=0.7, draw, circle](n51) at (8.5,11)  {$\bullet \bullet$};

\foreach \from/\to in {n51/n41, n51/n42, n41/n31, n41/n32, n42/n33, n42/n34, n31/n21, n32/n22, n33/n23, n34/n24,  n25/n31, n26/n32, n27/n33, n28/n34, n34/n29}
\draw (\from) -- (\to);
\end{tikzpicture}
\end{center}
\caption{Secondary structure (left) and corresponding tree representation (right)}
\label{mainproblemnolabels}
\end{figure}
Nucleotide positions belonging to a base pair are represented in the tree by a two-dot vertex, while each unpaired nucleotide position is represented in the tree by a one-dot vertex. The children of each two-dot vertex $v$ are all the vertices corresponding to nucleotides nested directly beneath the arc corresponding to $v$ in the secondary structure. 

We consider only secondary structures for which each nucleotide in the sequence is attached to at most one arc, and which are \emph{pseudoknot-free}: 
arcs drawn above the nucleotide positions as in Figure~\ref{mainproblemnolabels},
representing base pairs, do not intersect (because arc crossings lead to complex constraints that often cannot be realized in three dimensions~\cite{yann}).
We shall assume that the first and last nucleotide of the sequence form a base pair, so that the graph representation is a rooted tree. (Otherwise it is a forest, in which case we can introduce a virtual root in order to produce a tree.) Each tree vertex will be labelled with either one or two dots;
the only vertices that can be labelled with one dot are the leaves.

Now suppose that we are given only the target secondary structure $R$ of Figure~\ref{mainproblemnolabels}. The combinatorial RNA design problem then asks whether the nucleotide positions of the secondary structure can be labelled so that the resulting nucleotide sequence admits a unique set of \{G,C\} and \{A,U\} arcs of maximum size and so that this arc set equals~$R$. We shall see in Theorem~\ref{main} that the answer for this example is yes, and that one such labelling is given by Figure~\ref{mainproblem}.

We formalize these ideas as follows. 

\begin{defn}[Secondary structure]
A (pseudoknot-free) \emph{secondary structure} applied to a sequence of nucleotide positions $\{1,2,\dots,n\}$ is a set of arcs $\{i,j\}$ satisfying $1 \le i < j \le n$, for which no two arcs share the same endpoint and there are no two arcs $\{i,j\}$ and $\{k, \ell\}$ satisfying $i < k < j < \ell$.
\end{defn}

\begin{defn}[Maximum-size arc set]
A secondary structure applied to a nucleotide sequence is a \emph{maximum-size arc set} for the sequence if it contains the largest possible number of arcs (where only arcs of types \{G,C\} and \{A,U\} are allowed).
\end{defn}

\begin{defn}[Design, designable]
An RNA sequence is a \emph{design} if it admits a unique maximum-size arc set.
A secondary structure $R$ is \emph{designable} if there is a design $S$ whose (unique) maximum-size arc set equals $R$; in that case, we say that $S$ is a design for~$R$.
\end{defn}

\noindent {\bf Combinatorial RNA Design Problem:} Given a target secondary structure $R$, find a design for $R$ or else show that $R$ is not designable.
\mbox{}\\

We introduce some further definitions in order to summarize prior results on the combinatorial RNA design problem. 
Following \cite{halevs2016combinatorial}, a secondary structure in which all nucleotide positions are paired is \emph{saturated}, and otherwise is \emph{unsaturated}. 
The tree corresponding to a secondary structure is saturated when every leaf has two dots, and otherwise is unsaturated.
\begin{figure}[H]
\centering
\vspace{-5mm}
\subfloat[]{
\begin{tikzpicture}
\node at (1,1) {$\bullet$};
\node at (1.5,1) {$\bullet$};
\node at (2,1) {$\bullet$};
\node at (2.5,1) {$\bullet$};
\node at (3,1) {$\bullet$};
\node at (3.5,1) {$\bullet$};
\node at (4,1) {$\bullet$};
\node at (4.5,1) {$\bullet$};
\node at (5,1) {$\bullet$};
\node at (5.5,1) {$\bullet$};
\draw[bend left=65] (1,1.25) to (5.5,1.25);
\draw[bend left=65] (1.5,1.25) to (4,1.25);
\draw[bend left=65] (4.5,1.25) to (5,1.25);
\draw[bend left=65] (2,1.25) to (2.5,1.25);
\draw[bend left=65] (3,1.25) to (3.5,1.25);
\end{tikzpicture}}\hspace{2cm}
\subfloat[]{
\begin{tikzpicture}
\node at (1,1) {$\bullet$};
\node at (1.5,1) {$\bullet$};
\node at (2.25,1) {$\bullet$};
\node at (3,1) {$\bullet$};
\node at (3.5,1) {$\bullet$};
\node at (4,1) {$\bullet$};
\node at (4.5,1) {$\bullet$};
\node at (5,1) {$\bullet$};
\node at (5.5,1) {$\bullet$};
\draw[bend left=65] (1,1.25) to (5.5,1.25);
\draw[bend left=65] (1.5,1.25) to (4,1.25);
\draw[bend left=65] (4.5,1.25) to (5,1.25);
\draw[bend left=65] (3,1.25) to (3.5,1.25);
\end{tikzpicture}}
\vspace{-5mm}
\caption[Saturated and unsaturated secondary structures]{Saturated (left) and unsaturated (right) secondary structures}
\end{figure}

\begin{thm}[{Hale{\v{s}} et al.\ \cite[Result~R4]{halevs2016combinatorial}}]
\label{old} 
The secondary structure $R$ corresponding to a rooted saturated tree $T$ is designable if and only if the root of $T$ has at most four children and every other vertex of $T$ has at most three children.
\end{thm}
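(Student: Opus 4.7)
The plan is to prove both directions of the biconditional separately. For the necessary direction, suppose $T$ is saturated and either some non-root vertex $v$ has four or more children or the root has five or more children. I would show that no labelling of the vertices of $T$ with Watson--Crick pairs yields a design. Consider the boundary nucleotides surrounding $v$: its left dot $x$, the two dots $(a_i, b_i)$ of each child, and its right dot $y$. Since each pair $(x,y), (a_1, b_1), \ldots, (a_k, b_k)$ must come from $\{\text{AU}, \text{UA}, \text{GC}, \text{CG}\}$, a case analysis on the labels shows that some alternative arcing of the boundary (with the children's subtrees left intact) is always Watson--Crick-valid and attains the target number of arcs. The essential candidates are the ``full slide'' $\{x, a_1\}, \{b_1, a_2\}, \ldots, \{b_k, y\}$ and the ``partial slides'' of the form $\{x, a_i\}$ together with $\{b_j, y\}$, leaving the other children's arcs in place; a pigeonhole argument on only four available pair labels shows that at least one slide survives whenever the thresholds are exceeded. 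The asymmetry (root may have four children, non-root only three) arises because at the root there is no outer arc enclosing $x$ and $y$, so $x$ and $y$ cannot participate in a slide extending outside $v$, and one label's worth of constraint is lifted.

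For the sufficient direction, suppose every non-root vertex has at most three children and the root at most four. I would construct a design by an explicit recursive labelling. For each internal vertex $v$ with label $XY$, assign the left dots $a_1, \ldots, a_k$ of its children to \emph{distinct} nucleotides, chosen so that no partial slide at $v$ is simultaneously Watson--Crick-valid and so that the label of each child differs from $XY$ itself; with $k \le 3$ (or $k \le 4$ at the root, where the outer constraint disappears) this is feasible among the four nucleotide types, and freedom remains to recurse into each subtree. I would then verify designability by structural induction on the depth of $T$: any pseudoknot-free arcing of the resulting sequence must either preserve the arc at each vertex $v$, in which case the problem decomposes into the independent subtrees and the inductive hypothesis applies, or break some arc at $v$, in which case the labelling rule at $v$ forces the lost complement to be unrecoverable and the total arc count to be strictly smaller than the target.

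The main obstacle lies in the sufficient direction, specifically in showing that a \emph{locally} defined labelling rule globally excludes every alternative maximum arcing. I would address this via a decomposition lemma: every pseudoknot-free arcing of the labelled sequence admits a canonical alignment with the target arcing at each level of $T$, so that a global deviation can be localized to a slide at a single vertex. Once this decomposition is established, the local labelling constraint at that vertex suffices to destroy at least one complement, completing the uniqueness argument and hence the sufficiency proof.
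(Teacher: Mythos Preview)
The paper does not contain its own proof of this theorem: it is stated as a prior result of Hale\v{s} et al.\ and cited, with the paper remarking only that their proof ``is constructive, and gives many possible nucleotide labellings for the tree.'' There is therefore no proof in this paper against which to compare your proposal.

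The paper does, as a by-product of its main argument (Claim~2 in Section~\ref{sec:proof}), re-prove the sufficiency direction for the special case of saturated \emph{full binary} trees under the natural labelling. Its technique there is entirely different from yours: rather than a local labelling rule combined with structural induction on the tree, it uses an iterative ``running difference'' argument on balanced subsets of the nucleotide sequence, establishing via parity that nucleotides at each fixed height must pair only among themselves and hence that the maximum-size arc set is forced. Your approach---local constraints plus a decomposition lemma---is closer in spirit to what one expects from the original Hale\v{s} et al.\ argument and is aimed at the full class of trees in the statement, whereas the running-difference method as developed here is tailored to the highly symmetric full-binary case.

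On the substance of your sketch: the necessary direction via pigeonhole on the four pair labels is the right idea, though the case analysis needs care (for instance, when the repeated pair is the parent's rather than two children's, the valid slide is not the same). The real gap, which you yourself flag, is the decomposition lemma for sufficiency: asserting that any deviation from the target arcing can be localized to a slide at a single vertex is precisely the hard step, and nothing in your outline yet supplies it. Without that lemma the inductive argument does not close, since an alternative maximum arcing could in principle disrupt arcs at several vertices simultaneously in a way that is globally but not locally cost-neutral.
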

\noindent
The proof of Theorem \ref{old} given in \cite{halevs2016combinatorial} is constructive, and gives many possible nucleotide labellings for the tree. 

\begin{thm}[{Hale{\v{s}} et al.\ \cite[Result~R5]{halevs2016combinatorial}}]
\label{tertiary} 
The secondary structure $R$ corresponding to a rooted unsaturated tree $T$ containing a vertex with at least two $2$-dot children and at least one $1$-dot child is not designable.
\end{thm}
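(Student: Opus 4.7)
The plan is to assume for contradiction that the secondary structure $R$ corresponding to $T$ is designable, fix a design $S$ for $R$, and construct an arc set $R' \neq R$ for $S$ with $|R'| = |R|$; this will contradict the uniqueness of the maximum-size arc set of a design.

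Let $v$ be the hypothesized vertex, $\alpha, \beta$ two of its $2$-dot children, and $\pi$ one of its $1$-dot children. In $R$ these correspond to arcs $\{p,q\}, \{a_1,a_2\}, \{b_1,b_2\}$ (with $a_1 < a_2$ and $b_1 < b_2$) and to an unpaired position $z$; because sibling children of $v$ occupy pairwise disjoint position intervals inside $(p,q)$, the seven positions are distinct and ordered in a controlled way. Write $P,Q,A_1,A_2,B_1,B_2,Z$ for the labels that $S$ assigns at $p,q,a_1,a_2,b_1,b_2,z$, so that each of $\{P,Q\}, \{A_1,A_2\}, \{B_1,B_2\}$ is a Watson-Crick pair. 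I propose six candidate local modifications of $R$, each preserving the arc count. The three \emph{single-arc swaps} delete one of the three arcs and replace it by an arc joining $z$ to the endpoint labelled by the Watson-Crick complement of $Z$; they are applicable iff $Z$ lies in $\{P,Q\}$, $\{A_1,A_2\}$, or $\{B_1,B_2\}$ respectively. The \emph{cross-swap} deletes $\{a_1,a_2\}$ and $\{b_1,b_2\}$ and inserts $\{a_1,b_2\}$ and $\{a_2,b_1\}$; its label condition reduces to $A_1 = B_1$. Each of the two \emph{unnesting swaps} deletes $\{p,q\}$ together with one of $\{a_1,a_2\}, \{b_1,b_2\}$ and inserts the two arcs $\{p,a_1\}, \{a_2,q\}$ or $\{p,b_1\}, \{b_2,q\}$, with label condition $(A_1,A_2) = (Q,P)$ or $(B_1,B_2) = (Q,P)$ respectively. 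For each modification I would verify pseudoknot-freeness and $R' \neq R$; this is a routine check that exploits the disjointness of sibling intervals, so that every retained arc is either enclosed by or disjoint from each newly inserted arc.

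The crux is a short combinatorial argument that at least one of the six swaps always applies. If none of the single-arc swaps applies, then $Z$ lies in none of the three pairs $\{P,Q\}, \{A_1,A_2\}, \{B_1,B_2\}$, which forces all three pairs to be of the single Watson-Crick type not containing $Z$; say they are all $\{\mathrm{G},\mathrm{C}\}$ and $Z \in \{\mathrm{A},\mathrm{U}\}$. If neither unnesting swap applies, then $(A_1,A_2) \neq (Q,P)$ and $(B_1,B_2) \neq (Q,P)$; since all three pairs are $\{\mathrm{G},\mathrm{C}\}$ and each admits only two orientations, this forces $\alpha$ and $\beta$ to share the orientation of $v$, giving $A_1 = P = B_1$. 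But $A_1 = B_1$ is precisely the condition that enables the cross-swap, contradicting the assumption that it fails. Hence some $R' \neq R$ with $|R'| = |R|$ exists for $S$, so $S$ is not a design for $R$, and as the labelling $S$ was arbitrary, $R$ is not designable.

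The main obstacle I expect is cleanly verifying pseudoknot-freeness of each of the six modifications in the presence of additional children of $v$ and of non-trivial subtrees below $\alpha$ and $\beta$; once that geometric bookkeeping is carried out, the closing elimination argument is short and reduces to tracking at most two orientations of each of three Watson-Crick pairs.
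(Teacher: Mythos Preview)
The paper does not prove this statement; Theorem~\ref{tertiary} is quoted from Hale\v{s} et al.\ \cite{halevs2016combinatorial} as background and is used only as motivation (to show that the hypothesis ``saturated'' in Theorem~\ref{old} cannot simply be dropped). There is therefore no in-paper proof to compare against.

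That said, your argument is correct and complete under the paper's standing convention that the first and last nucleotides form a base pair, so that every internal vertex---including the root---corresponds to an arc. The six local modifications you describe all preserve the arc count, yield pseudoknot-free structures (your ``routine check'' goes through exactly because sibling intervals are disjoint and every retained arc is either nested in, encloses, or is disjoint from each inserted arc), and are genuinely different from~$R$. The closing case analysis is tight: if no single-arc swap applies then all three pairs $\{P,Q\},\{A_1,A_2\},\{B_1,B_2\}$ coincide as unordered Watson--Crick pairs; if neither unnesting swap applies then $(A_1,A_2)=(B_1,B_2)=(P,Q)$, forcing $A_1=B_1$ and enabling the cross-swap.

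One small caveat worth flagging when you write this up: if the secondary structure is a forest represented via a virtual root (the case the paper sets aside parenthetically), then $v$ could be that virtual root and there is no arc $\{p,q\}$; your unnesting swaps and the $\{p,q\}$ single-arc swap then vanish, and the remaining three swaps do not by themselves cover the case $(A_1,A_2)=(G,C)$, $(B_1,B_2)=(C,G)$. Under the paper's assumption this does not arise, but you should either state that assumption explicitly or add a brief separate treatment of the virtual-root case.
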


Theorem~\ref{tertiary} shows that we cannot remove the constraint ``saturated'' from Theorem~\ref{old}: for example, the secondary structure corresponding to the unsaturated tree 
\begin{center}
\begin{tikzpicture}[scale=0.4]
\node[scale=0.7, draw, circle](n41) at (4.5,9)  {$\bullet \bullet$};
\node[scale=0.7, draw, circle](n42) at (12.5,9)  {$\bullet \bullet$};
\node[scale=0.7, draw, circle](n43) at (8.5,9)  {$\bullet$};
\node[scale=0.7, draw, circle](n51) at (8.5,11)  {$\bullet \bullet$};

\foreach \from/\to in {n51/n41, n51/n42, n51/n43}
\draw (\from) -- (\to);
\end{tikzpicture}
\end{center}
\noindent
is not designable.
Nonetheless, in this paper we generalize Theorem~\ref{old} by showing that an infinite class of unsaturated secondary structures is designable, thereby solving the combinatorial RNA design problem for a new class of structures.

Following \cite{halevs2016combinatorial}, a \emph{colouring} of a rooted tree assigns a colour drawn from the set \{B, W, Y\} (where B = black, W = white, Y = grey) to each paired vertex of a rooted tree except the root. The colouring is \emph{proper} if it obeys the rules specified in Table~\ref{colouring}.
(These rules constrain the root to have at most four children, and each other vertex to have at most three children.)
The \emph{level} of a vertex in such a coloured tree is the difference between the number of black vertices and the number of white vertices on the path starting at the vertex and ending at the root. A proper colouring is \emph{separated} if no grey vertex shares a level with an unpaired vertex.

\begin{table}[h]
\begin{center}
\begin{tabular}{c | c}
Vertex colouring 	& The multiset of colours of the children must be contained in the multiset\\ \hline
none (root)		& \{B, W, Y, Y\} \\
B			& \{B, Y, Y\}	\\
W			& \{W, Y, Y\}	\\
Y			& \{B, W, Y\}	
\end{tabular}
\end{center}
\caption{Rules for a proper colouring of the vertices of a rooted tree}
\label{colouring}
\end{table}

\begin{thm}[{Hale{\v{s}} et al.\ \cite[Result~R6]{halevs2016combinatorial}}]
\label{sepcol} 
The secondary structure $R$ corresponding to a rooted tree $T$ which admits a separated proper colouring is designable.
\end{thm}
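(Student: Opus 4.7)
The plan is to construct an explicit design for $R$ directly from the separated proper colouring of $T$, then verify that the arcs of $R$ form the unique maximum-size arc set of the corresponding nucleotide sequence. For each paired (two-dot) vertex of $T$, the two nucleotide positions it represents would be assigned a pair of type \{G,C\} when the vertex is coloured Y, and a pair of type \{A,U\} when the vertex is coloured B or W. The orientation of the pair (GC versus CG, or AU versus UA) would be dictated by the level of the vertex, chosen so that the outer nucleotide letter alternates down each root-to-leaf path in a controlled manner. Every unpaired (one-dot) vertex would be assigned a single fixed nucleotide, say C; the separation hypothesis --- that no grey vertex shares a level with an unpaired vertex --- is precisely what makes this choice safe, by preventing an unpaired C from being able to pair with a G arising from a Y-coloured vertex at the same level.

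With the labelling in place, the first verification is routine: every arc of $R$ joins two nucleotides of type \{G,C\} or \{A,U\}, so $R$ itself is a valid pseudoknot-free arc set of size $|R|$ for the sequence, and hence the maximum-size arc set has size at least $|R|$. A matching upper bound on the number of simultaneously realizable arcs of the allowed types then follows from a letter-count across the sequence, using that each paired vertex contributes exactly one complementary pair and each unpaired vertex contributes a letter that, by the proper colouring rules of Table~\ref{colouring} and the separation condition, has no complementary partner available within a region where it could legally pair.

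The main obstacle is proving uniqueness: that every maximum-size arc set equals $R$. I would proceed by induction on the number of vertices of $T$, peeling off outermost arcs or deepest leaves. At each step, the colouring rules of Table~\ref{colouring} restrict the colour multiset among the children of the vertex under consideration, which in turn severely limits which nucleotide letters can appear directly nested beneath the corresponding arc. Combined with the pseudoknot-free nesting constraint, this would force any hypothetical maximum-size arc set to contain exactly the arc of $R$ at the vertex in question, after which the inductive hypothesis handles the subtrees. The delicate case is the interaction between one-dot children and Y-coloured children at the same level, and this is where the separation hypothesis is essential: without it, an unpaired C could be paired with an inner G from a Y-coloured vertex at the same level, yielding a distinct arc set of equal size and destroying uniqueness. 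I expect the case analysis around this interaction, in particular tracking how level parity and the three permitted colour multisets \{B,Y,Y\}, \{W,Y,Y\}, \{B,W,Y\} constrain admissible alternative pairings, to be the most intricate step.
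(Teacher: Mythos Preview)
This theorem is not proved in the present paper; it is quoted from Hale\v{s} et al.\ \cite{halevs2016combinatorial}, and the paper only records the construction in one sentence: all unpaired leaves are labelled~A, black vertices are labelled~GC, white vertices~CG, and grey vertices with one of~AU and~UA. Your proposal has the roles of the colours exchanged --- you assign $\{G,C\}$ to grey vertices and $\{A,U\}$ to black and white vertices, and label unpaired leaves~C rather than~A. This is not an innocuous relabelling. The quantity \emph{level} is defined as the number of black vertices minus the number of white vertices on the path to the root, and in the cited construction the assignment $B\mapsto GC$, $W\mapsto CG$ is chosen precisely so that the running $G$-versus-$C$ difference along the sequence tracks the level; the separation hypothesis, which is phrased in terms of levels, then directly governs which $A$'s (from leaves and grey vertices) can meet which $U$'s. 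Under your swap that link between level and the $\{G,C\}$ letters is severed, so your appeal to separation to rule out an unpaired~C pairing with a~G from a grey vertex no longer goes through as stated.

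There is also a concrete gap in your orientation rule. You say the choice between GC/CG and AU/UA is ``dictated by the level of the vertex''. But two grey siblings always lie at the \emph{same} level (grey contributes nothing to the black-minus-white count), so your rule forces them to receive the same orientation. Take a vertex with two grey children and no others: with your labelling the interior reads $G\,\ldots\,C\,G\,\ldots\,C$, and the target pairing $(G_1,C_1),(G_2,C_2)$ has the equal-size alternative $(G_1,C_2),(C_1,G_2)$, which is properly nested. Uniqueness fails. The construction in \cite{halevs2016combinatorial} avoids exactly this by fixing the orientation for black and white (at most one of each among siblings, by Table~\ref{colouring}) and using the freedom ``one of AU and UA'' to give two grey siblings \emph{opposite} orientations; your level-based rule cannot make that distinction.
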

\noindent
Theorem~\ref{sepcol} is proved in \cite{halevs2016combinatorial} using an algorithm for constructing a design for the secondary structure $R$ in which all leaves are labelled with A, and then the remaining black vertices are labelled with GC, white vertices with CG, and grey vertices with one of AU and~UA.

In Section~\ref{sec:results} we shall introduce the infinite class of ``P-unsaturated perfect floral trees'', and state without proof our main result (Theorem~\ref{main}) that these trees corresponds to designable secondary structures under the ``natural labelling'' of the structure with nucleotides.
We then show that our main result complements the previous result of Theorem~\ref{sepcol}, in that each can be used to establish the designability of a secondary structure that the other cannot. 
We then establish some extensions of the main result, as Corollaries \ref{prune} and \ref{GUcorollary}.
Verifying that the natural labelling gives a maximum-size arc set is straightforward, but proving that the corresponding sequence is a design for the target structure appears to be much more difficult and subtle.
In Section~\ref{sec:pics} we illustrate the proof techniques for Theorem~\ref{main} by reference to an extended example.
We then prove the main result in full generality in Section~\ref{sec:proof}.
We conclude in Section~\ref{sec:future} with some open questions.

\section{Main result and some extensions}
\label{sec:results}

We wish to identify a new class of unsaturated trees that are designable (and which must necessarily avoid the class of unsaturated trees described in Theorem~\ref{tertiary}). In order to do so, we introduce Definition~\ref{def:P-unsat}. Note that the \emph{depth} of a vertex $v$ in a rooted tree is the length of the path from the root to $v$. 

\begin{defn}[P-unsaturated]
\label{def:P-unsat}
A rooted tree is \emph{P-unsaturated} if every leaf at the maximum depth is assigned exactly one dot and all other vertices exactly two dots.
\end{defn}
\noindent
(The letter P in the name P-unsaturated is inspired by the requirement that all one-dot vertices of the rooted tree must be Pendant vertices.)

\begin{figure}[H]
\centering
\begin{tikzpicture}[scale=0.5]
\node[scale=0.7, draw, circle](n31) at (2.5,7)  {$\bullet$};
\node[scale=0.7, draw, circle](n32) at (6.5,7)  {$\bullet$};
\node[scale=0.7, draw, circle](n41) at (4.5,9)  {$\bullet \bullet$};
\node[scale=0.7, draw, circle](n42) at (12.5,9)  {$\bullet \bullet$};
\node[scale=0.7, draw, circle](n51) at (8.5,11)  {$\bullet \bullet$};

\foreach \from/\to in {n51/n41, n51/n42, n41/n31, n41/n32}
\draw (\from) -- (\to);
\end{tikzpicture}
\caption{P-unsaturated binary tree}
\end{figure}
\noindent
A \emph{binary tree} is a rooted tree in which every vertex has at most two children. 
A \emph{perfect binary tree} is a rooted tree in which every internal vertex has exactly two children and all leaves are at the same depth. 
We call a rooted tree \emph{floral} if removing all vertices at the maximum depth leaves a binary tree, and \emph{perfect floral} if removing all vertices at the maximum depth leaves a perfect binary tree. For example, Figure~\ref{Punfft} shows a P-unsaturated perfect floral tree of height~3.
\begin{figure}[h]
\centering
\begin{tikzpicture}[scale=0.6]
\node[scale=0.7, draw, circle](n21) at (1,5)  {$\bullet$};
\node[scale=0.7, draw, circle](n22) at (2,5)  {$\bullet$};
\node[scale=0.7, draw, circle](n23) at (3,5)  {$\bullet$};
\node[scale=0.7, draw, circle](n24) at (4,5)  {$\bullet$}; 
\node[scale=0.7, draw, circle](n25) at (6,5)  {$\bullet$}; 
\node[scale=0.7, draw, circle](n255) at (7,5)  {$\bullet$};
\node[scale=0.7, draw, circle](n266) at (14.5,5)  {$\bullet$};
\node[scale=0.7, draw, circle](n31) at (2.5,7)  {$\bullet\bullet$};
\node[scale=0.7, draw, circle](n32) at (6.5,7)  {$\bullet\bullet$};
\node[scale=0.7, draw, circle](n33) at (10.5,7)  {$\bullet\bullet$};
\node[scale=0.7, draw, circle](n34) at (14.5,7)  {$\bullet\bullet$};
\node[scale=0.7, draw, circle](n41) at (4.5,9)  {$\bullet\bullet$};
\node[scale=0.7, draw, circle](n42) at (12.5,9)  {$\bullet\bullet$};
\node[scale=0.7, draw, circle](n51) at (8.5,11)  {$\bullet\bullet$};

\foreach \from/\to in {n51/n41, n51/n42, n41/n31, n41/n32, n42/n33, n42/n34, n31/n21, n31/n22, n31/n23, n31/n24, n32/n25, n32/n255, n34/n266}
\draw (\from) -- (\to);
\end{tikzpicture}
\caption{P-unsaturated perfect floral tree}\label{perfectfloral}
\label{Punfft}
\end{figure}

Theorem~\ref{main} (whose full proof is postponed until Section~\ref{sec:proof}) states that the secondary structure correponding to a P-unsaturated perfect floral tree is designable. The statement of this result involves the following assignment of nucleotides.

\begin{defn}[Natural labelling]
\label{def:natural}
Let $T$ be a P-unsaturated perfect floral tree of height $n+1$. The \emph{natural labelling} of $T$ labels all vertices at depth $n+1$ with A if $n+1$ is even, and with C if $n+1$ is odd; and labels the remaining vertices at even depths from left to right with alternating AU and UA, and those at odd depths from left to right with alternating GC and CG. 
\end{defn}
\noindent
For example, the natural labelling of the tree in Figure~\ref{perfectfloral} is shown in Figure~\ref{natlab}.

\begin{figure}[h]
\centering
\begin{tikzpicture}[scale=0.6]
\node[scale=0.7, draw, circle](n21) at (1,5)  {C};
\node[scale=0.7, draw, circle](n22) at (2,5)  {C};
\node[scale=0.7, draw, circle](n23) at (3,5)  {C};
\node[scale=0.7, draw, circle](n24) at (4,5)  {C}; 
\node[scale=0.7, draw, circle](n25) at (6,5)  {C};
\node[scale=0.7, draw, circle](n255) at (7,5)  {C}; 
\node[scale=0.7, draw, circle](n266) at (14.5,5)  {C};
\node[scale=0.7, draw, circle](n31) at (2.5,7)  {AU};
\node[scale=0.7, draw, circle](n32) at (6.5,7)  {UA};
\node[scale=0.7, draw, circle](n33) at (10.5,7)  {AU};
\node[scale=0.7, draw, circle](n34) at (14.5,7)  {UA};
\node[scale=0.7, draw, circle](n41) at (4.5,9)  {GC};
\node[scale=0.7, draw, circle](n42) at (12.5,9)  {CG};
\node[scale=0.7, draw, circle](n51) at (8.5,11)  {AU};

\foreach \from/\to in {n51/n41, n51/n42, n41/n31, n41/n32, n42/n33, n42/n34, n31/n21, n31/n22, n31/n23, n31/n24, n32/n25, n32/n255, n34/n266}
\draw (\from) -- (\to);
\end{tikzpicture}
\caption{Natural labelling of P-unsaturated perfect floral tree}\label{labelledperfectfloral}
\label{natlab}
\end{figure}

\begin{thm}[Main Result]\label{main}
The secondary structure $R$ corresponding to a P-unsaturated perfect floral tree $T$ is designable, and a design for $R$ is given by the nucleotide sequence $S$ corresponding to the natural labelling of~$T$.
\end{thm}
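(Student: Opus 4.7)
The plan is to prove Theorem~\ref{main} in two stages. First, verify that the natural labelling achieves the maximum possible number of Watson--Crick arcs on $S$; second, show that $R$ is the unique such maximum-size arc set. The paper itself flags the second stage as the delicate one.

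For the maximum-size stage, I would first confirm that each arc in $R$ is a valid $\{A,U\}$ or $\{G,C\}$ pair, directly from the natural labelling. Counting nucleotide occurrences in $S$ from the tree traversal, each paired vertex at a non-maximum even (resp.\ odd) height contributes one A and one U (resp.\ one G and one C), while the max-height leaves all contribute A (if $n+1$ is even) or all contribute C (if $n+1$ is odd). A direct computation then gives
\[
|R| \;=\; \min(\#A, \#U) \,+\, \min(\#G, \#C),
\]
which is the universal upper bound on the number of $\{A,U\}$ plus $\{G,C\}$ arcs in any secondary structure on $S$. Hence $R$ is maximum size.

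For uniqueness I would induct on the height $n+1$ of $T$, with the inductive hypothesis strengthened so that the root label may be any of AU, UA, GC, or CG (the rest of the labels being determined by the alternation rule and the max-height convention). This strengthening is needed because the labellings that $T$ induces on the subtrees $T_L$ and $T_R$ begin with GC or CG at their roots; the symmetries $A\leftrightarrow U$ and $G\leftrightarrow C$ collapse the four cases to two parallel arguments. The central step is a lemma that in any maximum-size arc set $R'$ of $S$, the root arc $\{1,|S|\}$ must belong to $R'$. Assume for contradiction that it does not: then position~$1$ is either unpaired or pairs with a U at some internal position $j<|S|$, and analogously for position~$|S|$, and the pseudoknot-free condition splits the sequence into independent regions. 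For each such splitting I would analyse the letter counts in each region by tracing the tree, and show that the sum of Watson--Crick upper bounds across the regions falls strictly below what is required to reach $|R|$ total arcs, yielding the contradiction $|R'|<|R|$. With $\{1,|S|\}\in R'$ forced, the strengthened inductive hypothesis applied to the sub-sequences corresponding to $T_L$ and $T_R$ forces every remaining arc of $R'$ to match that of $R$.

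The main obstacle I anticipate is precisely the letter-counting argument that forces the outermost arc: the strict loss must be demonstrated for every admissible way $R'$ could avoid $\{1,|S|\}$, and the arithmetic depends sensitively on exactly where the splits fall within the full-binary part of $T$. The key combinatorial fact one needs is that any interior split creates unavoidable mismatches between complementary letter counts across the regions --- an excess A in one region, an excess U in another, and similarly for G and C --- that outweigh the one or two alternative outer arcs introduced by abandoning $\{1,|S|\}$. The alternating natural labelling together with the uniformity of the max-height leaves is essential here: these two properties together control precisely how unbalanced the complementary counts become on either side of any interior cut, which is exactly what the extended example in Section~\ref{sec:pics} is presumably designed to illustrate.
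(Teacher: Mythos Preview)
Your approach is genuinely different from the paper's. The paper does \emph{not} induct on the height of~$T$ or force the outermost arc and recurse. Instead it introduces a global invariant: for each ``balanced set'' $\mathcal{B}$ of nucleotides (initially all G's and C's, or all A's and U's), the running difference with respect to $\mathcal{B}$ partitions the remaining nucleotides by parity, and one parity class forms a new balanced set whose members must pair only among themselves. Iterating this (Algorithm~1 in Section~\ref{sec:proof}) isolates the height-$(n{+}1)$ nucleotides as the only ones that can remain unpaired, and a second pass on the pruned saturated tree shows that nucleotides at each fixed height must pair among themselves, which forces $M=R$. The engine is Lemma~\ref{importantlemma}, a parity count of how many nucleotides at a given height occur between two marked positions.

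Your inductive scheme has a structural gap that you have not addressed. Suppose you succeed in proving that the root arc $\{1,|S|\}$ lies in every maximum-size arc set~$R'$. The remaining arcs of $R'$ then live on the substring $S[2,\dots,|S|-1]=S_L\,S_R$, where $S_L,S_R$ are the sequences for the two subtrees $T_L,T_R$. To invoke the inductive hypothesis on $T_L$ and $T_R$ separately, you must first rule out arcs of $R'$ that join a position in $S_L$ to a position in $S_R$. Nothing in your outline does this: the root-arc lemma as stated applies to a single tree's sequence, not to a concatenation of two sibling sequences, and the root of $T_L$ is labelled GC while $S_R$ contains plenty of C's that could in principle pair with that~G. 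You would need either to strengthen the induction to forests of same-height trees with the appropriate alternating labelling, or to prove a separate ``second-level'' forcing lemma for the pair of arcs at height~1. Either route is substantial and likely requires the same kind of careful letter-count case analysis you already flag as the main obstacle for the root arc itself; the paper's parity method sidesteps this recursion entirely.
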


Theorems~\ref{sepcol} and~\ref{main} complement each other. 
The unsaturated tree shown in Figure~\ref{separated}, for example, admits the displayed separated colouring and so is designable by Theorem~\ref{sepcol}, but is not floral and so cannot be shown to be designable using Theorem~\ref{main} (nor by its Corollary~\ref{prune} below).
Conversely, consider a P-unsaturated perfect floral tree $T$ of height~5, each of whose vertices at depth 4 has one child that is assigned one dot. We know this tree is designable by Theorem~\ref{main}, but we now demonstrate with reference to Figure~\ref{notseparated} that it cannot be shown to be designable using Theorem~\ref{sepcol}. Suppose, for a contradiction, that we can assign a separated proper colouring to~$T$. By the separated colouring condition, no vertex at depth~4 can be coloured~Y (else it would share a level with its unpaired child), and then by the colouring rules given in Table~\ref{colouring} each pair of sibling vertices at depth~4 must be coloured as one B and one~W. The colouring rules applied to vertices at depth 3, then depth 2, and so on, then force
all vertices at depths 1 and 3 to be coloured~Y, and
all vertices at depth 2 to be coloured in sibling pairs as one B and one~W. But then the grey vertices at depth 1 share the level~0 with the shaded unpaired vertex contained in the subtree of $T$ shown in Figure~\ref{notseparated}, contradicting that the proper colouring is separated.

\begin{figure}[h]
\centering
\begin{tikzpicture}[scale=0.25]
\node[scale=0.7, draw, circle](n01) at (15,14)  {$\bullet \bullet$};
\node[scale=0.7, draw, circle](n11) at ( 0,8)  {$\bullet \bullet$};
\node[scale=0.7, draw, circle](n12) at (10,8)  {$\bullet \bullet$};
\node[scale=0.7, draw, circle](n13) at (20,8)  {$\bullet \bullet$};
\node[scale=0.7, draw, circle](n14) at (30,8)  {$\bullet \bullet$};
\node[] at (-1, 10)  {B};
\node[] at ( 9, 10)  {W};
\node[] at (20, 10)  {Y};
\node[] at (30, 10)  {Y};
\node[] at (-1, 6)  {1};
\node[] at ( 9, 6)  {$-1$};
\node[] at (20, 6)  {0};
\node[] at (30, 6)  {0};
\node[scale=0.7, draw, circle](n21) at (0,2)  {$\bullet$};
\node[scale=0.7, draw, circle](n22) at (10,2)  {$\bullet$};
\node[] at (-1, 0)  {1};
\node[] at ( 9, 0)  {$-1$};

\foreach \from/\to in {
n01/n11, n01/n12, n01/n13, n01/n14,
n11/n21, n12/n22}
\draw (\from) -- (\to);
\end{tikzpicture}

\caption{Unsaturated tree admitting a separated proper colouring (levels shown beneath vertices)}
\label{separated}
\end{figure}

\begin{figure}[h]
\centering
\begin{tikzpicture}[scale=0.17]
\node[scale=0.7, draw, circle](n01) at (32,30)  {$\bullet \bullet$};
\node[scale=0.7, draw, circle](n11) at (16,24)  {$\bullet \bullet$};
\node[scale=0.7, draw, circle](n12) at (48,24)  {$\bullet \bullet$};
\node[] at (16, 26.5)  {Y};
\node[] at (48, 26.5)  {Y};
\node[scale=0.7, draw, circle](n21) at (8,18)  {$\bullet \bullet$};
\node[scale=0.7, draw, circle](n22) at (24,18)  {$\bullet \bullet$};
\node[] at (8, 20.5)   {B};
\node[] at (24, 20.5)  {W};
\node[scale=0.7, draw, circle](n31) at (4,12)  {$\bullet \bullet$};
\node[scale=0.7, draw, circle](n32) at (12,12)  {$\bullet \bullet$};
\node[] at (4, 14.5)   {Y};
\node[] at (12, 14.5)  {Y};
\node[scale=0.7, draw, circle](n41) at (2,6)  {$\bullet \bullet$};
\node[scale=0.7, draw, circle](n42) at (6,6)  {$\bullet \bullet$};
\node[] at (1.8, 8.5)  {B};
\node[] at (6.2, 8.5)  {W};
\node[scale=0.7, draw, circle](n51) at (2,0)  {$\bullet$};
\node[fill=red!30, scale=0.7, draw, circle](n52) at (6,0)  {$\bullet$};

\foreach \from/\to in {
n01/n11, n01/n12, 
n11/n21, n11/n22, 
n21/n31, n21/n32, 
n31/n41, n31/n42, 
n41/n51, n42/n52}
\draw (\from) -- (\to);

\draw[dash pattern=on 2pt off 1pt] (n12) -- (44,21);
\draw[dash pattern=on 2pt off 1pt] (n12) -- (52,21);
\draw[dash pattern=on 2pt off 1pt] (n22) -- (22,15);
\draw[dash pattern=on 2pt off 1pt] (n22) -- (26,15);
\draw[dash pattern=on 2pt off 1pt] (n32) -- (11,9);
\draw[dash pattern=on 2pt off 1pt] (n32) -- (13,9);
\end{tikzpicture}

\caption{Subtree of proper colouring of P-unsaturated perfect floral tree of height~5}
\label{notseparated}
\end{figure}

We show in Corollary~\ref{prune} that we can relax the condition ``perfect floral'' in Theorem~\ref{main} to ``floral''. The example structures in Figure~\ref{prune-illustration} illustrate the ideas of the proof. 

\begin{cor}
\label{prune}
The secondary structure $R$ corresponding to a P-unsaturated floral tree $T$ is designable, and a design for $R$ is given by the nucleotide sequence corresponding to a labelled subtree of the natural labelling of a perfect floral tree.
\end{cor}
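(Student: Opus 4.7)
The plan is to embed $T$ as a subtree of a P-unsaturated full floral tree $T'$ of the same height $n+1$, and then transfer the designability of $T'$ given by Theorem~\ref{main} back to~$T$. Let $T_B$ denote the binary tree obtained from $T$ by deleting its 1-dot leaves (which all lie at height $n+1$). I would enlarge $T_B$ to a full binary tree $T'_B$ of height $n$ in two steps: (i) add a 2-dot sibling to every vertex at height $<n$ that has only one child, and (ii) attach a complete 2-dot binary tree below every 2-dot leaf of $T_B$ lying at height $<n$, extending it to height~$n$. Re-attaching the original 1-dot leaves at height $n+1$ then produces a P-unsaturated full floral tree $T'$ that contains $T$ as a subtree, with corresponding secondary structure $R'\supseteq R$. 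Write $S'$ for the nucleotide sequence obtained from the natural labelling of $T'$, and let $S$ be the subsequence of $S'$ supported on the positions of~$T$; this $S$ is the promised labelled subtree, and every arc of $R$ automatically labels a Watson--Crick pair in $S$ because every arc of $R$ already lies in~$R'$.

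Let $A$ be any maximum-size arc set for~$S$. Since $R$ is realizable on $S$ we have $|A|\ge|R|$. The key geometric observation is that the nucleotides of each subtree added when passing from $T$ to $T'$ occupy a contiguous range of positions in $S'$ that is disjoint from the positions of~$S$, because each such subtree is inserted between two consecutive pre-order positions of~$T$. Consequently any such contiguous range lies either strictly inside or strictly outside each arc of~$A$ (whose endpoints lie in~$S$), so the arcs of $R'\setminus R$, which are contained in these ranges, share no endpoints with and do not cross any arc of~$A$; they are also non-crossing among themselves because they lie inside the non-crossing set~$R'$. Hence $A'':=A\cup(R'\setminus R)$ is a valid pseudoknot-free arc set for $S'$ of size $|A|+|R'|-|R|$.

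By Theorem~\ref{main}, $S'$ is a design for~$R'$, so $|R'|$ is the maximum possible arc-set size for~$S'$; in particular $|A''|\le|R'|$, which forces $|A|\le|R|$ and hence $|A|=|R|$. But then $|A''|=|R'|$, so the uniqueness clause of Theorem~\ref{main} yields $A''=R'$; restricting to the positions of~$S$ gives $A=R$, and $S$ is a design for~$R$. The main obstacle I anticipate is the bookkeeping required to make the contiguous-range observation airtight, in particular verifying that each added subtree occupies a single contiguous range of positions in $S'$ no matter how it is placed among its parent's other children; once this is in hand, the corollary follows directly by transferring designability from $T'$ down to~$T$.
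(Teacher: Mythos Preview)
Your argument is correct and follows essentially the same route as the paper: embed $T$ in a P-unsaturated full floral tree $T'$, restrict the natural labelling of $T'$ to $T$, and show that any maximum-size arc set $A$ for $S$ extends, by adjoining the arcs of $R'\setminus R$, to a maximum-size arc set for $S'$, whence Theorem~\ref{main} forces $A=R$. The paper's proof is terser---it simply asserts that the ``red'' pruned arcs can be re-adjoined to any arc set on $S$ to yield a valid arc set on $S'$---whereas you make the contiguous-range observation explicit to justify that $A\cup(R'\setminus R)$ is pseudoknot-free; this is exactly the bookkeeping the paper leaves implicit, and your concern about it is well placed but easily resolved since each maximal added subtree is traversed in one contiguous block of the pre-order.
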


\begin{proof}
Let $T'$ be the smallest perfect floral tree containing $T$ as a subtree; $T'$ is necessarily P-unsaturated. Let $\tau'$ be the labelled perfect floral tree obtained by assigning nucleotides to $T'$ according to the natural labelling. By Theorem \ref{main}, the labelled tree $\tau'$ yields a design $S'$ of nucleotides whose (unique) maximum-size arc set corresponds to $T'$. Let $\tau$ be the labelled subtree of $\tau'$ whose unlabelled version is $T$, and let $S$ be the associated sequence of nucleotides.

Colour the nucleotides and arcs that belong to $S'$ but not to~$S$ red.
The red substructure of $S'$ consists of all nucleotides and corresponding arcs that must be pruned from the labelled perfect floral tree $\tau'$ to obtain the labelled floral tree~$\tau$.
It follows that the secondary structure $R$ corresponding to $T$ is a maximum-size arc set for~$S$: if $S$ admits a larger arc set than $R$ then that arc set, together with the red arc set of $S'$, gives a larger arc set for $S'$ (contradicting that the secondary structure corresponding to $T'$ is a maximum-size arc set for $S'$).
It also follows that $S$ is a design for $R$: if $S$ admits an alternative maximum-size arc set to $R$ then that arc set, together with the red arc set of $S'$, gives an alternative maximum-size arc set for $S'$ (contradicting that $S'$ is a design).
\end{proof}

\begin{figure}[h]
\centering
\begin{tikzpicture}[scale=0.42]
\node[scale=0.7, draw, circle](n022) at (6.5,5)  {$\bullet$};
\node[scale=0.7, draw, circle](n023) at (5,5)  {$\bullet$};
\node[scale=0.7, draw, circle](n024) at (8,5)  {$\bullet$};
\node[scale=0.7, draw, circle](n032) at (6.5,7)  {$\bullet \bullet$};
\node[scale=0.7, draw, circle](n041) at (4.5,9)  {$\bullet \bullet$};
\node[scale=0.7, draw, circle](n042) at (12.5,9)  {$\bullet \bullet$};
\node[scale=0.7, draw, circle](n051) at (8.5,11)  {$\bullet \bullet$};

\node[scale=1](n052) at (2.5,11)  {$T$:};

\foreach \from/\to in {n051/n041, n051/n042, n041/n032, n032/n022, n032/n023, n032/n024}
\draw (\from) -- (\to);
\end{tikzpicture}
\hspace{1.5cm}
\begin{tikzpicture}[scale=0.42]
\node[scale=0.7, draw, circle](n000022) at (6.5,5)  {$\bullet$};
\node[scale=0.7, draw, circle](n000023) at (5,5)  {$\bullet$};
\node[scale=0.7, draw, circle](n000024) at (8,5)  {$\bullet$};
\node[scale=0.7, draw, circle, red](n31) at (2.5,7)  {$\bullet \bullet$};
\node[scale=0.7, draw, circle](n32) at (6.5,7)  {$\bullet \bullet$};
\node[scale=0.7, draw, circle, red](n33) at (10.5,7)  {$\bullet \bullet$};
\node[scale=0.7, draw, circle, red](n34) at (14.5,7)  {$\bullet \bullet$};
\node[scale=0.7, draw, circle](n41) at (4.5,9)  {$\bullet \bullet$};
\node[scale=0.7, draw, circle](n42) at (12.5,9)  {$\bullet \bullet$};
\node[scale=0.7, draw, circle](n51) at (8.5,11)  {$\bullet \bullet$};

\node[scale=1](n052) at (2.5,11)  {$T'$:};

\foreach \from/\to in {n51/n41, n51/n42, n41/n32, n32/n000024, n32/n000023, n32/n000022}
\draw (\from) -- (\to);
\draw[red] (n41) -- (n31);
\draw[red] (n42) -- (n33);
\draw[red] (n42) -- (n34);
\end{tikzpicture}

\vspace{0.5cm}
\begin{tikzpicture}[scale=0.42]
\node[scale=0.7, draw, circle](n22) at (6.5,5)  {C};
\node[scale=0.7, draw, circle](n23) at (5,5)  {C};
\node[scale=0.7, draw, circle](n24) at (8,5)  {C};
\node[scale=0.7, draw, circle](n32) at (6.5,7)  {UA};
\node[scale=0.7, draw, circle](n41) at (4.5,9)  {GC};
\node[scale=0.7, draw, circle](n42) at (12.5,9)  {CG};
\node[scale=0.7, draw, circle](n51) at (8.5,11)  {AU};

\node[scale=1](n052) at (2.5,11)  {$\tau$:};

\foreach \from/\to in {n51/n41, n51/n42, n41/n32, n32/n22, n32/n23, n32/n24}
\draw (\from) -- (\to);
\end{tikzpicture}
\hspace{1.5cm}
\begin{tikzpicture}[scale=0.42]
\node[scale=0.7, draw, circle](n00022) at (6.5,5)  {C};
\node[scale=0.7, draw, circle](n00023) at (5,5)  {C};
\node[scale=0.7, draw, circle](n00024) at (8,5)  {C};
\node[scale=0.7, draw, circle, red](n31) at (2.5,7)  {AU};
\node[scale=0.7, draw, circle](n32) at (6.5,7)  {UA};
\node[scale=0.7, draw, circle, red](n33) at (10.5,7)  {AU};
\node[scale=0.7, draw, circle, red](n34) at (14.5,7)  {UA};
\node[scale=0.7, draw, circle](n41) at (4.5,9)  {GC};
\node[scale=0.7, draw, circle](n42) at (12.5,9)  {CG};
\node[scale=0.7, draw, circle](n51) at (8.5,11)  {AU};

\node[scale=1](n052) at (2.5,11)  {$\tau'$:};

\foreach \from/\to in {n51/n41, n51/n42, n41/n32, n32/n00024, n32/n00023, n32/n00022}
\draw (\from) -- (\to);
\draw[red] (n41) -- (n31);
\draw[red] (n42) -- (n33);
\draw[red] (n42) -- (n34);
\end{tikzpicture}

\vspace{0.2cm}
\begin{tikzpicture}[scale=0.8]
\node at (0.25,1) {$S$:};
\node at (1,1) {A};
\node at (1.5,1) {G};
\node at (2,1) {U};
\node at (2.5,1) {C};
\node at (3,1) {C};
\node at (3.5,1) {C};
\node at (4,1) {A};
\node at (4.5,1) {C};
\node at (5,1) {C};
\node at (5.5,1) {G};
\node at (6,1) {U};
\draw[bend left=65] (1,1.25) to (6,1.25);
\draw[bend left=65] (1.5,1.25) to (4.5,1.25);
\draw[bend left=65] (5,1.25) to (5.5,1.25);
\draw[bend left=65] (2,1.25) to (4,1.25);
\end{tikzpicture}\hspace{1.5cm}
\begin{tikzpicture}[scale=0.8]
\node at (0.25,1) {$S'$:};
\node at (1,1) {A};
\node at (1.5,1) {G};
\node[red] at (2,1) {A};
\node[red] at (2.5,1) {U};
\node at (3,1) {U};
\node at (3.5,1) {C};
\node at (4,1) {C};
\node at (4.5,1) {C};
\node at (5,1) {A};
\node at (5.5,1) {C};
\node at (6,1) {C};
\node[red] at (6.5,1) {A};
\node[red] at (7,1) {U};
\node[red] at (7.5,1) {U};
\node[red] at (8,1) {A};
\node at (8.5,1) {G};
\node at (9,1) {U};
\draw[bend left=65] (1,1.25) to (9,1.25);
\draw[bend left=65] (1.5,1.25) to (5.5,1.25);
\draw[bend left=65] (6,1.25) to (8.5,1.25);
\draw[bend left=65, red] (2,1.25) to (2.5,1.25);
\draw[bend left=65] (3,1.25) to (5,1.25);
\draw[bend left=65, red] (6.5,1.25) to (7,1.25);
\draw[bend left=65, red] (7.5,1.25) to (8,1.25);
\end{tikzpicture}

\caption{Illustration of the proof of Corollary~\ref{prune}}
\label{prune-illustration}
\end{figure}

Recall from Section~\ref{sec:intro} that we have assumed the Watson-Crick model  (under which minimizing the total energy is equivalent to maximizing the number of \{G,C\} and \{A,U\} base pairs). We show in Corollary~\ref{GUcorollary} that the result of Corollary~\ref{prune} carries over to the Nussinov-Jacobson model with $\alpha=\beta=\gamma=-1$, under which minimizing the total energy is equivalent to maximizing the number of \{G,C\} and \{A,U\} and \{G,U\} base pairs.

\begin{cor}
\label{GUcorollary}
The secondary structure $R$ corresponding to a P-unsaturated floral tree $T$ is designable under the  model in which minimizing the total energy is equivalent to maximizing the number of \{G,C\} and \{A,U\} and \{G,U\} base pairs.
\end{cor}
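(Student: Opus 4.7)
The plan is to deduce Corollary~\ref{GUcorollary} from Corollary~\ref{prune} by a short nucleotide-counting argument showing that no $\{\text{G},\text{U}\}$ base pair can appear in any maximum-size arc set of the designed sequence. Let $S$ be the nucleotide sequence produced by Corollary~\ref{prune}, and write $|G|_S$ and $|U|_S$ for the number of G's and U's in $S$.

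First I would verify the identity $|G|_S + |U|_S = |R|$. In the labelling of $\tau$ used for Corollary~\ref{prune}, each two-dot vertex carries one of the labels AU, UA, GC, CG and so contributes exactly one nucleotide from $\{\text{G},\text{U}\}$ to $S$, while each one-dot leaf carries label A or C and contributes none. Summing over all vertices of $\tau$ shows that $|G|_S + |U|_S$ equals the number of paired vertices of $T$, which equals $|R|$.

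Next, for any arc set $R'$ of $S$ consisting only of $\{\text{G},\text{C}\}$, $\{\text{A},\text{U}\}$, and $\{\text{G},\text{U}\}$ base pairs, every arc occupies at least one position holding a G or a U: an AU arc uses one U, a GC arc uses one G, and a GU arc uses one of each. Since each position lies on at most one arc,
\[
|R'| \le |G|_S + |U|_S = |R|,
\]
with equality only if every arc of $R'$ uses exactly one G or U, which excludes GU arcs. Thus every maximum-size arc set of $S$ under the extended model is in fact a maximum-size Watson-Crick arc set, and Corollary~\ref{prune} identifies this set uniquely as $R$; hence $S$ is also a design for $R$ under the extended model. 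I expect no real obstacle here: the inequality is pure counting and is unaffected by the non-crossing condition, while the identity $|G|_S + |U|_S = |R|$ is stable under the pruning of Corollary~\ref{prune} because deleting a paired vertex removes exactly one G or U together with one element of $R$, and deleting a 1-dot A or C leaf changes neither side.
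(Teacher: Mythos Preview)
Your proposal is correct and follows essentially the same approach as the paper: both invoke Corollary~\ref{prune} to obtain the sequence~$S$, establish that the number of G's plus the number of U's in~$S$ equals~$|R|$, and then use a nucleotide-counting argument to show that any arc set containing a $\{\text{G},\text{U}\}$ pair has strictly fewer than~$|R|$ arcs. The paper phrases the count as $(g-n)+(u-n)+n<g+u$ for $n>0$ GU~pairs, whereas you phrase it as ``each arc occupies at least one position holding a G or a U, and a GU arc occupies two''; these are the same argument.
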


\begin{proof}
Let $R$ be the secondary structure corresponding to a P-unsaturated floral tree~$T$. By Corollary~\ref{prune}, there is a design $S$ for $R$ under the Watson-Crick model (containing no \{G,U\} base pairs), and $S$ corresponds to a labelled subtree of the natural labelling of a perfect floral tree. 
Since all G and U nucleotides are paired in the natural labelling, the number of arcs in $R$ equals the number $g$ of $G$ nucleotides in $S$ plus the number $u$ of $U$ nucleotides in~$S$.

We shall show that $S$ is also a design for $R$ under the model which involves maximizing the number of \{G,C\} and \{A,U\} and \{G,U\} base pairs, by showing that an alternative arc set to $R$ that includes at least one \{G,U\} base pair must contain fewer arcs than the $g+u$ contained in~$R$. Let the number of \{G,U\} base pairs in an arc set under this more general model be $n>0$. Then there are
at most $g-n$ nucleotides G remaining to form \{G,C\} base pairs, and 
at most $u-n$ nucleotides U remaining to form \{A,U\} base pairs, giving a total base pair count of at most $(g-n)+(u-n)+n < g+u$.
\end{proof}

\section{Illustration of proof techniques}
\label{sec:pics}

Although the natural labelling of a P-unsaturated perfect floral tree follows a simple pattern, and it is relatively straightforward to show that Theorem \ref{main} holds when $T$ has small height, finding a general argument that applies to all tree heights appears to be very delicate.
In Section~\ref{sec:proof} we will prove Theorem~\ref{main} according to the following outline 
(where $R$ is the secondary structure corresponding to a P-unsaturated perfect floral tree~$T$, and $S$ is the nucleotide sequences corresponding to the natural labelling of~$T$).
Let $R$, $S$, and $T$ be as given in Theorem~\ref{main}.
We must establish that~$S$ has a unique maximum-size arc set, and that this arc set equals~$R$.
We first show that all nucleotides in $S$ that are unpaired in $T$ (those at depth $n+1$) must also remain unpaired in every maximum-size arc set. In view of this, we show how to reduce to the case of a saturated perfect binary tree. Our main tool is a running difference of strategically chosen subsets of the nucleotides in $S$, and a key insight is to focus on the parity of these running differences.

Having reduced a P-unsaturated perfect floral tree to a saturated perfect binary tree, we could then apply Theorem~\ref{old} to show that the secondary structure is designable. However, to keep our proof self-contained and to demonstrate the versatility of the method of running differences, we instead give our own proof in which we show that for every depth~$j$ of the reduced tree, nucleotides at depth~$j$ must all pair with one another, and that this forces a maximum-size arc set for $S$ to equal~$R$.

We now give a detailed illustration of the proof of Theorem \ref{main}, using the example of a P-unsaturated perfect binary tree $T$ of height 4. (We use a binary tree in this illustration simply for convenience: the arguments readily extend to a P-unsaturated perfect floral tree.) The natural labelling $\tau$ of $T$ is:

\begin{center}
\begin{tikzpicture}
[scale=0.475,auto=left]
\node[circle, scale=0.7, draw](n21) at (1.5,5)  {A};
\node[circle, scale=0.7, draw](n22) at (3.5,5)  {A};
\node[circle, scale=0.7, draw](n23) at (5.5,5)  {A};
\node[circle, scale=0.7, draw](n24) at (7.5,5)  {A};
\node[circle, scale=0.7, draw](n25) at (9.5,5)  {A};
\node[circle, scale=0.7, draw](n26) at (11.5,5)  {A};
\node[circle, scale=0.7, draw](n27) at (13.5,5)  {A};
\node[circle, scale=0.7, draw](n28) at (15.5,5)  {A};
\node[circle, scale=0.7, draw](n29) at (17.5,5)  {A};
\node[circle, scale=0.7, draw](n210) at (19.5,5)  {A};
\node[circle, scale=0.7, draw](n211) at (21.5,5)  {A};
\node[circle, scale=0.7, draw](n212) at (23.5,5)  {A};
\node[circle, scale=0.7, draw](n213) at (25.5,5)  {A};
\node[circle, scale=0.7, draw](n214) at (27.5,5)  {A};
\node[circle, scale=0.7, draw](n215) at (29.5,5)  {A};
\node[circle, scale=0.7, draw](n216) at (31.5,5)  {A};
\node[circle, scale=0.7, draw](n31) at (2.5,7)  {GC};
\node[circle, scale=0.7, draw](n32) at (6.5,7)  {CG};
\node[circle, scale=0.7, draw](n33) at (10.5,7)  {GC};
\node[circle, scale=0.7, draw](n34) at (14.5,7)  {CG};
\node[circle, scale=0.7, draw](n35) at (18.5,7)  {GC};
\node[circle, scale=0.7, draw](n36) at (22.5,7)  {CG};
\node[circle, scale=0.7, draw](n37) at (26.5,7)  {GC};
\node[circle, scale=0.7, draw](n38) at (30.5,7)  {CG};
\node[circle, scale=0.7, draw](n41) at (4.5,9)  {AU};
\node[circle, scale=0.7, draw](n42) at (12.5,9)  {UA};
\node[circle, scale=0.7, draw](n43) at (20.5,9)  {AU};
\node[circle, scale=0.7, draw](n44) at (28.5,9)  {UA};
\node[circle, scale=0.7, draw](n51) at (8.5,11)  {GC};
\node[circle, scale=0.7, draw](n52) at (24.5,11)  {CG};
\node[circle, scale=0.7, draw](n61) at (16.5,13)  {AU};

\foreach \from/\to in {n61/n51, n61/n52, n51/n41, n51/n42, n52/n43, n52/n44, n41/n31, n41/n32, n42/n33, n42/n34, n43/n35, n43/n36, n44/n37, n44/n38, n31/n21, n31/n22, n32/n23, n32/n24, n33/n25, n33/n26, n34/n27, n34/n28, n35/n29, n35/n210, n36/n211, n36/n212, n37/n213, n37/n214, n38/n215, n38/n216}
\draw (\from) -- (\to);
\end{tikzpicture}
\end{center}

The corresponding sequence $S$ (represented on two lines to assist reading) is:

\begin{center}
\begin{tikzpicture}  [scale=0.625,auto=left]
\node at (2,1) {A};
\node at (3,1) {G};
\node at (4,1) {A};
\node at (5,1) {G};
\node at (6,1) {A};
\node at (7,1) {A};
\node at (8,1) {C};
\node at (9,1) {C};
\node at (10,1) {A};
\node at (11,1) {A};
\node at (12,1) {G};
\node at (13,1) {U};
\node at (14,1) {U};
\node at (15,1) {G};
\node at (16,1) {A};
\node at (17,1) {A};
\node at (18,1) {C}; 
\node at (19,1) {C};
\node at (20,1) {A};
\node at (21,1) {A};
\node at (22,1) {G};
\node at (23,1) {A};
\node at (24,1) {C};
\node at (3,0) {C};
\node at (4,0) {A};
\node at (5,0) {G};
\node at (6,0) {A};
\node at (7,0) {A};
\node at (8,0) {C};
\node at (9,0) {C};
\node at (10,0) {A};
\node at (11,0) {A};
\node at (12,0) {G};
\node at (13,0) {U};
\node at (14,0) {U};
\node at (15,0) {G};
\node at (16,0) {A};
\node at (17,0) {A}; 
\node at (18,0) {C};
\node at (19,0) {C};
\node at (20,0) {A};
\node at (21,0) {A};
\node at (22,0) {G};
\node at (23,0) {A};
\node at (24,0) {G};
\node at (25,0) {U};
\end{tikzpicture}
\end{center}

We shall show that $S$ admits a unique maximum-size arc set, and that this arc set equals~$R$. We introduce the following definition.

\begin{defn}[Balanced set]
\label{balanced_defn}
A subset of nucleotides from a nucleotide sequence forms a \emph{balanced set} if it comprises either an equal number of Gs and Cs or an equal number of As and Us.
\end{defn}

For example, in Figure \ref{runningdifference} the subset of G and C nucleotides shown boxed and coloured forms a balanced set $\mathcal{B}$ within the nucleotide sequence $S$.
The tag shown in Figure~\ref{runningdifference} above each nucleotide~$N$ (not in $\mathcal{B}$) is the difference between the number of Gs and the number of Cs occurring in $\mathcal{B}$ to the left of $N$, namely the \emph{running difference} with respect to $\mathcal{B}$. We use the corresponding definition for the running difference with respect to a balanced set of A and U nucleotides.

\begin{figure}[H]
\centering
\begin{tikzpicture}  [scale=0.625,auto=left]
\node at (2,2) {A};
\node at (3,2)[draw, red] {G};
\node at (4,2) {A};
\node at (5,2)[draw,red] {G};
\node at (6,2) {A};
\node at (7,2) {A};
\node at (8,2)[draw,red] {C};
\node at (9,2)[draw,red] {C};
\node at (10,2) {A};
\node at (11,2) {A};
\node at (12,2)[draw,red] {G};
\node at (13,2) {U};
\node at (14,2) {U};
\node at (15,2)[draw,red] {G};
\node at (16,2) {A};
\node at (17,2) {A};
\node at (18,2)[draw,red] {C}; 
\node at (19,2)[draw,red] {C};
\node at (20,2) {A};
\node at (21,2) {A};
\node at (22,2)[draw,red] {G};
\node at (23,2) {A};
\node at (24,2)[draw,red] {C};
\node at (3,0)[draw,red] {C};
\node at (4,0) {A};
\node at (5,0)[draw,red] {G};
\node at (6,0) {A};
\node at (7,0) {A};
\node at (8,0)[draw,red] {C};
\node at (9,0)[draw,red] {C};
\node at (10,0) {A};
\node at (11,0) {A};
\node at (12,0)[draw,red] {G};
\node at (13,0) {U};
\node at (14,0) {U};
\node at (15,0)[draw,red] {G};
\node at (16,0) {A};
\node at (17,0) {A}; 
\node at (18,0)[draw,red] {C};
\node at (19,0)[draw,red] {C};
\node at (20,0) {A};
\node at (21,0) {A};
\node at (22,0)[draw,red] {G};
\node at (23,0) {A};
\node at (24,0)[draw,red] {G};
\node at (25,0) {U};
\node at (2,3) {0};
\node at (3,3)[red] {};
\node at (4,3) {1};
\node at (5,3)[red] {};
\node at (6,3) {2};
\node at (7,3) {2};
\node at (8,3)[red] {};
\node at (9,3)[red] {};
\node at (10,3) {0};
\node at (11,3) {0};
\node at (12,3)[red] {};
\node at (13,3) {1};
\node at (14,3) {1};
\node at (15,3)[red] {};
\node at (16,3) {2};
\node at (17,3) {2};
\node at (18,3)[red] {}; 
\node at (19,3)[red] {};
\node at (20,3) {0};
\node at (21,3) {0};
\node at (22,3)[red] {};
\node at (23,3) {1};
\node at (24,3)[red] {};
\node at (3,1)[red] {};
\node at (4,1) {-1};
\node at (5,1)[red] {};
\node at (6,1) {0};
\node at (7,1) {0};
\node at (8,1)[red] {};
\node at (9,1)[red] {};
\node at (10,1) {-2};
\node at (11,1) {-2};
\node at (12,1)[red] {};
\node at (13,1) {-1};
\node at (14,1) {-1};
\node at (15,1)[red] {};
\node at (16,1) {0};
\node at (17,1) {0}; 
\node at (18,1)[red] {};
\node at (19,1)[red] {};
\node at (20,1) {-2};
\node at (21,1) {-2};
\node at (22,1)[red] {};
\node at (23,1) {-1};
\node at (24,1)[red] {};
\node at (25,1) {0};
\end{tikzpicture}
\caption{A balanced set of nucleotides and its corresponding running differences}
\label{runningdifference}
\end{figure}

In order to prove Theorem \ref{main} for $T$, we iteratively define a succession of balanced sets of nucleotides, occurring as disjoint subsets of the sequence $S$, and use the running difference with respect to these balanced sets to constrain the possible arcs of a maximum-size arc set.

Suppose a maximum-size arc set $M$ is applied to the sequence $S$. The labelled tree $\tau$ from which $S$ is derived shows that $S$ admits an arc set for which only A nucleotides are unpaired; therefore in $M$ all G, C, and U nucleotides of the sequence must be paired.\\

\emph{Iteration 1:}

$S$ has an equal number of Gs and Cs, forming a balanced set $\mathcal{B}$ which is boxed and coloured as shown in Figure \ref{runningdifference}. To achieve the maximum number of arcs, all elements of $\mathcal{B}$ must pair with one another; this is indicated by the colouring, which will be retained in all subsequent iterations (whereas the boxes are a temporary notation used only in the current iteration). To avoid inducing an arc crossing, every arc in $M$ joining uncoloured nucleotides (As and Us) must therefore enclose equally many of the two types of boxed nucleotides (Gs and Cs). Tag the uncoloured nucleotides with the running difference with respect to $\mathcal{B}$, as in Figure \ref{runningdifference}.

Then the uncoloured nucleotides with a given tag cannot pair with uncoloured nucleotides with a different tag, and cannot pair with coloured nucleotides (because those must all pair with one another). Therefore, the uncoloured nucleotides with a given tag can pair only with one another. This implies the weaker result that uncoloured nucleotides whose tags have the same parity can pair only with one another. (We have not yet shown that the uncoloured nucleotides with a given tag must all pair with one another, nor what the pairings must be.)

As a visual aid, we copy the boxing, colouring, and tagging applied to the nucleotides of~$S$ onto the corresponding labelled tree $\tau$: the nucleotides belonging to $\mathcal{B}$ (the Gs and~Cs) are boxed and coloured, and the uncoloured nucleotides (the As and Us) are tagged as above. (This does not assume the result we wish to prove, namely that applying a maximum-size arc set to $S$ results in the labelled tree $\tau$.)

\begin{center}
\begin{tikzpicture}
[scale=0.475,auto=left]
\node[circle, scale=0.7, draw](n21) at (1.5,4)  {A};
\node[circle, scale=0.7, draw](n22) at (3.5,4)  {A};
\node[circle, scale=0.7, draw](n23) at (5.5,4)  {A};
\node[circle, scale=0.7, draw](n24) at (7.5,4)  {A};
\node[circle, scale=0.7, draw](n25) at (9.5,4)  {A};
\node[circle, scale=0.7, draw](n26) at (11.5,4)  {A};
\node[circle, scale=0.7, draw](n27) at (13.5,4)  {A};
\node[circle, scale=0.7, draw](n28) at (15.5,4)  {A};
\node[circle, scale=0.7, draw](n29) at (17.5,4)  {A};
\node[circle, scale=0.7, draw](n210) at (19.5,4)  {A};
\node[circle, scale=0.7, draw](n211) at (21.5,4)  {A};
\node[circle, scale=0.7, draw](n212) at (23.5,4)  {A};
\node[circle, scale=0.7, draw](n213) at (25.5,4)  {A};
\node[circle, scale=0.7, draw](n214) at (27.5,4)  {A};
\node[circle, scale=0.7, draw](n215) at (29.5,4)  {A};
\node[circle, scale=0.7, draw](n216) at (31.5,4)  {A};
\node[] at (1.5,5)  {2};
\node[] at (3.5,5)  {2};
\node[] at (5.5,5)  {0};
\node[] at (7.5,5)  {0};
\node[] at (9.5,5)  {2};
\node[] at (11.5,5)  {2};
\node[] at (13.5,5)  {0};
\node[] at (15.5,5)  {0};
\node[] at (17.5,5)  {0};
\node[] at (19.5,5)  {0};
\node[] at (21.5,5)  {-2};
\node[] at (23.5,5)  {-2};
\node[] at (25.5,5)  {0};
\node[] at (27.5,5)  {0};
\node[] at (29.5,5)  {-2};
\node[] at (31.5,5)  {-2};

\node[scale=0.7, draw,fill=red!30](n31) at (2.5,7)  {GC};
\node[scale=0.7, draw,fill=red!30](n32) at (6.5,7)  {CG};
\node[scale=0.7, draw, fill=red!30](n33) at (10.5,7)  {GC};
\node[scale=0.7, draw, fill=red!30](n34) at (14.5,7)  {CG};
\node[scale=0.7, draw, fill=red!30](n35) at (18.5,7)  {GC};
\node[scale=0.7, draw, fill=red!30](n36) at (22.5,7)  {CG};
\node[scale=0.7, draw, fill=red!30](n37) at (26.5,7)  {GC};
\node[scale=0.7, draw, fill=red!30](n38) at (30.5,7)  {CG};
\node[circle, scale=0.7, draw](n41) at (4.5,9)  {AU};
\node[circle, scale=0.7, draw](n42) at (12.5,9)  {UA};
\node[circle, scale=0.7, draw](n43) at (20.5,9)  {AU};
\node[circle, scale=0.7, draw](n44) at (28.5,9)  {UA};
\node[] at (4.5,10)  {1, 1};
\node[] at (12.5,10)  {1, 1};
\node[] at (20.5,10)  {-1, -1};
\node[] at (28.5,10)  {-1, -1};
\node[scale=0.7, draw, fill=red!30](n51) at (8.5,11)  {GC};
\node[scale=0.7, draw, fill=red!30](n52) at (24.5,11)  {CG};
\node[circle, scale=0.7, draw](n61) at (16.5,13)  {AU};
\node[] at (16.5,14)  {0, 0};

\foreach \from/\to in {n61/n51, n61/n52, n51/n41, n51/n42, n52/n43, n52/n44, n41/n31, n41/n32, n42/n33, n42/n34, n43/n35, n43/n36, n44/n37, n44/n38, n31/n21, n31/n22, n32/n23, n32/n24, n33/n25, n33/n26, n34/n27, n34/n28, n35/n29, n35/n210, n36/n211, n36/n212, n37/n213, n37/n214, n38/n215, n38/n216}
\draw (\from) -- (\to);
\end{tikzpicture}
\end{center}

As described, all uncoloured nucleotides whose tags are odd (this being the opposite parity to that of the nucleotides at depth 4) can pair only with one another. But these nucleotides form a new balanced set $\mathcal{B'}$ (comprising A and U nucleotides), and so must all pair with one another to achieve the maximum number of arcs; this will be indicated by colouring in the next iteration. Remove the tags and boxes but retain the colouring.\\

\emph{Iteration 2:}

Box and colour the nucleotides of the new balanced set $\mathcal{B'}$, as shown below.

\begin{center}
\begin{tikzpicture}  [scale=0.625,auto=left]
\node at (2,2) {A};
\node at (3,2)[red] {G};
\node at (4,2)[draw,red] {A};
\node at (5,2)[red] {G};
\node at (6,2) {A};
\node at (7,2) {A};
\node at (8,2)[red] {C};
\node at (9,2)[red] {C};
\node at (10,2) {A};
\node at (11,2) {A};
\node at (12,2)[red] {G};
\node at (13,2)[draw,red] {U};
\node at (14,2)[draw,red] {U};
\node at (15,2)[red] {G};
\node at (16,2) {A};
\node at (17,2) {A};
\node at (18,2)[red] {C}; 
\node at (19,2)[red] {C};
\node at (20,2) {A};
\node at (21,2) {A};
\node at (22,2)[red] {G};
\node at (23,2)[draw,red] {A};
\node at (24,2)[red] {C};
\node at (3,0)[red] {C};
\node at (4,0)[draw,red] {A};
\node at (5,0)[red] {G};
\node at (6,0) {A};
\node at (7,0) {A};
\node at (8,0)[red] {C};
\node at (9,0)[red] {C};
\node at (10,0) {A};
\node at (11,0) {A};
\node at (12,0)[red] {G};
\node at (13,0)[draw,red] {U};
\node at (14,0)[draw,red] {U};
\node at (15,0)[red] {G};
\node at (16,0) {A};
\node at (17,0) {A}; 
\node at (18,0)[red] {C};
\node at (19,0)[red] {C};
\node at (20,0) {A};
\node at (21,0) {A};
\node at (22,0)[red] {G};
\node at (23,0)[draw,red] {A};
\node at (24,0)[red] {G};
\node at (25,0) {U};
\end{tikzpicture}
\end{center}

To avoid inducing an arc crossing, every arc in $M$ joining two nucleotides not in $\mathcal{B'}$ must enclose an equal number of the two nucleotide types in $\mathcal{B'}$ (boxed As and boxed Us). Tag the uncoloured nucleotides with the running difference with respect to $\mathcal{B'}$.

\begin{center}
\begin{tikzpicture}  [scale=0.625,auto=left]
\node at (2,2) {A};
\node at (3,2)[red] {G};
\node at (4,2)[draw,red] {A};
\node at (5,2)[red] {G};
\node at (6,2) {A};
\node at (7,2) {A};
\node at (8,2)[red] {C};
\node at (9,2)[red] {C};
\node at (10,2) {A};
\node at (11,2) {A};
\node at (12,2)[red] {G};
\node at (13,2)[draw,red] {U};
\node at (14,2)[draw,red] {U};
\node at (15,2)[red] {G};
\node at (16,2) {A};
\node at (17,2) {A};
\node at (18,2)[red] {C}; 
\node at (19,2)[red] {C};
\node at (20,2) {A};
\node at (21,2) {A};
\node at (22,2)[red] {G};
\node at (23,2)[draw,red] {A};
\node at (24,2)[red] {C};
\node at (3,0)[red] {C};
\node at (4,0)[draw,red] {A};
\node at (5,0)[red] {G};
\node at (6,0) {A};
\node at (7,0) {A};
\node at (8,0)[red] {C};
\node at (9,0)[red] {C};
\node at (10,0) {A};
\node at (11,0) {A};
\node at (12,0)[red] {G};
\node at (13,0)[draw,red] {U};
\node at (14,0)[draw,red] {U};
\node at (15,0)[red] {G};
\node at (16,0) {A};
\node at (17,0) {A}; 
\node at (18,0)[red] {C};
\node at (19,0)[red] {C};
\node at (20,0) {A};
\node at (21,0) {A};
\node at (22,0)[red] {G};
\node at (23,0)[draw,red] {A};
\node at (24,0)[red] {G};
\node at (25,0) {U};
\node at (2,3) {0};
\node at (3,3)[red] {};
\node at (4,3) {};
\node at (5,3)[red] {};
\node at (6,3) {1};
\node at (7,3) {1};
\node at (8,3)[red] {};
\node at (9,3)[red] {};
\node at (10,3) {1};
\node at (11,3) {1};
\node at (12,3)[red] {};
\node at (13,3) {};
\node at (14,3) {};
\node at (15,3)[red] {};
\node at (16,3) {-1};
\node at (17,3) {-1};
\node at (18,3)[red] {}; 
\node at (19,3)[red] {};
\node at (20,3) {-1};
\node at (21,3) {-1};
\node at (22,3)[red] {};
\node at (23,3) {};
\node at (24,3)[red] {};
\node at (3,1)[red] {};
\node at (4,1) {};
\node at (5,1)[red] {};
\node at (6,1) {1};
\node at (7,1) {1};
\node at (8,1)[red] {};
\node at (9,1)[red] {};
\node at (10,1) {1};
\node at (11,1) {1};
\node at (12,1)[red] {};
\node at (13,1) {};
\node at (14,1) {};
\node at (15,1)[red] {};
\node at (16,1) {-1};
\node at (17,1) {-1}; 
\node at (18,1)[red] {};
\node at (19,1)[red] {};
\node at (20,1) {-1};
\node at (21,1) {-1};
\node at (22,1)[red] {};
\node at (23,1) {};
\node at (24,1)[red] {};
\node at (25,1) {0};
\end{tikzpicture}
\end{center}

Uncoloured nucleotides whose tags have the same parity can pair only with one another. The corresponding version of $\tau$ (again shown only as a visual aid) is:

\begin{center}
\begin{tikzpicture}
[scale=0.475,auto=left]
\node[circle, scale=0.7, draw](n21) at (1.5,4)  {A};
\node[circle, scale=0.7, draw](n22) at (3.5,4)  {A};
\node[circle, scale=0.7, draw](n23) at (5.5,4)  {A};
\node[circle, scale=0.7, draw](n24) at (7.5,4)  {A};
\node[circle, scale=0.7, draw](n25) at (9.5,4)  {A};
\node[circle, scale=0.7, draw](n26) at (11.5,4)  {A};
\node[circle, scale=0.7, draw](n27) at (13.5,4)  {A};
\node[circle, scale=0.7, draw](n28) at (15.5,4)  {A};
\node[circle, scale=0.7, draw](n29) at (17.5,4)  {A};
\node[circle, scale=0.7, draw](n210) at (19.5,4)  {A};
\node[circle, scale=0.7, draw](n211) at (21.5,4)  {A};
\node[circle, scale=0.7, draw](n212) at (23.5,4)  {A};
\node[circle, scale=0.7, draw](n213) at (25.5,4)  {A};
\node[circle, scale=0.7, draw](n214) at (27.5,4)  {A};
\node[circle, scale=0.7, draw](n215) at (29.5,4)  {A};
\node[circle, scale=0.7, draw](n216) at (31.5,4)  {A};
\node[] at (1.5,5)  {1};
\node[] at (3.5,5)  {1};
\node[] at (5.5,5)  {1};
\node[] at (7.5,5)  {1};
\node[] at (9.5,5)  {-1};
\node[] at (11.5,5)  {-1};
\node[] at (13.5,5)  {-1};
\node[] at (15.5,5)  {-1};
\node[] at (17.5,5)  {1};
\node[] at (19.5,5)  {1};
\node[] at (21.5,5)  {1};
\node[] at (23.5,5)  {1};
\node[] at (25.5,5)  {-1};
\node[] at (27.5,5)  {-1};
\node[] at (29.5,5)  {-1};
\node[] at (31.5,5)  {-1};

\node[circle, scale=0.7, draw, fill=red!30](n31) at (2.5,7)  {GC};
\node[circle, scale=0.7, draw, fill=red!30](n32) at (6.5,7)  {CG};
\node[circle, scale=0.7, draw, fill=red!30](n33) at (10.5,7)  {GC};
\node[circle, scale=0.7, draw, fill=red!30](n34) at (14.5,7)  {CG};
\node[circle, scale=0.7, draw, fill=red!30](n35) at (18.5,7)  {GC};
\node[circle, scale=0.7, draw, fill=red!30](n36) at (22.5,7)  {CG};
\node[circle, scale=0.7, draw, fill=red!30](n37) at (26.5,7)  {GC};
\node[circle, scale=0.7, draw, fill=red!30](n38) at (30.5,7)  {CG};
\node[scale=0.7, draw, fill=red!30](n41) at (4.5,9)  {AU};
\node[scale=0.7, draw, fill=red!30](n42) at (12.5,9)  {UA};
\node[scale=0.7, draw, fill=red!30](n43) at (20.5,9)  {AU};
\node[scale=0.7, draw, fill=red!30](n44) at (28.5,9)  {UA};
\node[circle, scale=0.7, draw, fill=red!30](n51) at (8.5,11)  {GC};
\node[circle, scale=0.7, draw, fill=red!30](n52) at (24.5,11)  {CG};
\node[circle, scale=0.7, draw](n61) at (16.5,13)  {AU};
\node[] at (16.5,14)  {0, 0};

\foreach \from/\to in {n61/n51, n61/n52, n51/n41, n51/n42, n52/n43, n52/n44, n41/n31, n41/n32, n42/n33, n42/n34, n43/n35, n43/n36, n44/n37, n44/n38, n31/n21, n31/n22, n32/n23, n32/n24, n33/n25, n33/n26, n34/n27, n34/n28, n35/n29, n35/n210, n36/n211, n36/n212, n37/n213, n37/n214, n38/n215, n38/n216}
\draw (\from) -- (\to);
\end{tikzpicture}
\end{center}

All uncoloured nucleotides whose tags are even (this being the opposite parity to that of the nucleotides at depth 4) can pair only with one another. There are only two such nucleotides, namely the initial A and final U of the sequence $S$. They form a trivial new balanced set $\mathcal{B''}$ and so must pair with one another. Remove the tags and boxes but retain the colouring.\\

\emph{Iteration 3:}

Box and colour the nucleotides of the new balanced set $\mathcal{B''}$. 

\begin{center}
\begin{tikzpicture}  [scale=0.625,auto=left]
\node at (2,2)[draw,red] {A};
\node at (3,2)[red] {G};
\node at (4,2)[red] {A};
\node at (5,2)[red] {G};
\node at (6,2) {A};
\node at (7,2) {A};
\node at (8,2)[red] {C};
\node at (9,2)[red] {C};
\node at (10,2) {A};
\node at (11,2) {A};
\node at (12,2)[red] {G};
\node at (13,2)[red] {U};
\node at (14,2)[red] {U};
\node at (15,2)[red] {G};
\node at (16,2) {A};
\node at (17,2) {A};
\node at (18,2)[red] {C}; 
\node at (19,2)[red] {C};
\node at (20,2) {A};
\node at (21,2) {A};
\node at (22,2)[red] {G};
\node at (23,2)[red] {A};
\node at (24,2)[red] {C};
\node at (3,0)[red] {C};
\node at (4,0)[red] {A};
\node at (5,0)[red] {G};
\node at (6,0) {A};
\node at (7,0) {A};
\node at (8,0)[red] {C};
\node at (9,0)[red] {C};
\node at (10,0) {A};
\node at (11,0) {A};
\node at (12,0)[red] {G};
\node at (13,0)[red] {U};
\node at (14,0)[red] {U};
\node at (15,0)[red] {G};
\node at (16,0) {A};
\node at (17,0) {A}; 
\node at (18,0)[red] {C};
\node at (19,0)[red] {C};
\node at (20,0) {A};
\node at (21,0) {A};
\node at (22,0)[red] {G};
\node at (23,0)[red] {A};
\node at (24,0)[red] {G};
\node at (25,0)[draw,red] {U};
\end{tikzpicture}
\end{center}

At this point, only the nucleotides at depth 4 are uncoloured, which is the stopping criterion. Since all coloured nucleotides must pair with one another, and all nucleotides at depth 4 (the uncoloured nucleotides) are As, the nucleotides at depth 4 must remain unpaired in $M$.

We will use this fact to determine $M$ completely. Before doing so, we observe several properties that allow us to simplify the argument presented so far.
Consider the balanced sets $\mathcal{B}$, $\mathcal{B'}$, $\mathcal{B''}$ in iterations 1 to 3. Observe that the nucleotides in $S$ occurring at a given depth in $\tau$ are either all in, or all not in, the current balanced set. Furthermore the tags (with respect to this balanced set) of all nucleotides at a given depth share the same parity, and the parity switches between successive tagged depths. As a result, the labelled tree $\tau$ for each iteration 1 to 3 can be condensed as follows:

\begin{figure}[H]
\centering
\subfloat[Iteration 1]{
\begin{minipage}{0.32\textwidth}
\begin{tikzpicture}[scale=0.6,auto=left]
\node[circle, scale=0.7, draw, thick] (n01) at (1,1) {A} ;
\node[] at (-1.5,1) {Depth 4 $\rightarrow$};
\node[right of=n01] {0};
\node[scale=0.7,fill=red!30,  draw](n11) at (1,3)  {GC};
\node[circle, scale=0.7, draw](n21) at (1,5)  {AU};
\node[right of=n21] {1};
\node[scale=0.7,fill=red!30,  draw](n31) at (1,7)  {GC};
\node[circle, scale=0.7, draw](n41) at (1,9)  {AU};
\node[right of=n41] {0};

\foreach \from/\to in {n01/n11, n11/n21, n21/n31, n31/n41}
\draw (\from) -- (\to);

\node[scale=1.5](arrow) at (5,7) {$\rightarrow$};
\end{tikzpicture}
\end{minipage}\hfill
}
\subfloat[Iteration 2]{
\begin{minipage}{0.32\textwidth}
\hspace{2cm}
\begin{tikzpicture}[scale=0.6,auto=left]
\node[circle, scale=0.7, draw, thick] (n01) at (8,1) {A} ;
\node[right of=n01] {1};
\node[circle, scale=0.7,fill=red!30, draw](n11) at (8,3)  {GC};
\node[scale=0.7,fill=red!30,  draw](n21) at (8,5)  {AU};
\node[circle, scale=0.7,fill=red!30, draw](n31) at (8,7)  {GC};
\node[circle, scale=0.7, draw](n41) at (8,9)  {AU};
\node[right of=n41] {0};

\foreach \from/\to in {n01/n11, n11/n21, n21/n31, n31/n41}
\draw (\from) -- (\to);

\node[scale=1.5](arrow) at (12.5,7) {$\rightarrow$};
\end{tikzpicture}
\end{minipage}\hfill
}
\subfloat[Iteration 3]{
\begin{minipage}{0.32\textwidth}
\hspace{2cm}
\begin{tikzpicture}[scale=0.6,auto=left]
\node[circle, scale=0.7, draw, thick] (n01) at (15,1) {A} ;
\node[circle, scale=0.7,fill=red!30,  draw](n11) at (15,3)  {GC};
\node[circle, scale=0.7,fill=red!30,  draw](n21) at (15,5)  {AU};
\node[circle, scale=0.7,fill=red!30, draw](n31) at (15,7)  {GC};
\node[scale=0.7,fill=red!30,  draw](n41) at (15,9)  {AU};

\foreach \from/\to in {n01/n11, n11/n21, n21/n31, n31/n41}
\draw (\from) -- (\to);
\end{tikzpicture}
\end{minipage}
}
\caption{Simplified representation of iterations 1 to 3 using condensed tree}
\label{condensed}
\end{figure}

In the condensed tree, all vertices at a given depth in the original tree $\tau$ are represented by a single vertex labelled with either AU or GC according to the nucleotide types appearing at that depth in $\tau$. 
The numbers placed at uncoloured depths give the parity of the running difference with respect to the current balanced set.

We now show that $M$ must equal the arc set $R$ corresponding to~$T$.
Since all nucleotides in $S$ occurring at depth 4 in $\tau$ remain unpaired in $M$, we may remove them from $\tau$ to obtain the labelled height 3 saturated perfect binary tree $\widehat{\tau}$, namely:

\begin{center}
\begin{tikzpicture}
[scale=0.5,auto=left]
\node[circle, scale=0.7, draw](n31) at (2.5,7)  {GC};
\node[circle, scale=0.7, draw](n32) at (6.5,7)  {CG};
\node[circle, scale=0.7, draw](n33) at (10.5,7)  {GC};
\node[circle, scale=0.7, draw](n34) at (14.5,7)  {CG};
\node[circle, scale=0.7, draw](n35) at (18.5,7)  {GC};
\node[circle, scale=0.7, draw](n36) at (22.5,7)  {CG};
\node[circle, scale=0.7, draw](n37) at (26.5,7)  {GC};
\node[circle, scale=0.7, draw](n38) at (30.5,7)  {CG};
\node[circle, scale=0.7, draw](n41) at (4.5,9)  {AU};
\node[circle, scale=0.7, draw](n42) at (12.5,9)  {UA};
\node[circle, scale=0.7, draw](n43) at (20.5,9)  {AU};
\node[circle, scale=0.7, draw](n44) at (28.5,9)  {UA};
\node[circle, scale=0.7, draw](n51) at (8.5,11)  {GC};
\node[circle, scale=0.7, draw](n52) at (24.5,11)  {CG};
\node[circle, scale=0.7, draw](n61) at (16.5,13)  {AU};

\foreach \from/\to in {n61/n51, n61/n52, n51/n41, n51/n42, n52/n43, n52/n44, n41/n31, n41/n32, n42/n33, n42/n34, n43/n35, n43/n36, n44/n37, n44/n38}
\draw (\from) -- (\to);
\end{tikzpicture}
\end{center}

The corresponding nucleotide sequence $\widehat{S}$ is:

\begin{center}
\begin{tikzpicture}
\node at (1,1) {A};
\node at (1.5,1) {G};
\node at (2,1) {A};
\node at (2.5,1) {G};
\node at (3,1) {C};
\node at (3.5,1) {C};
\node at (4,1) {G};
\node at (4.5,1) {U};
\node at (5,1) {U};
\node at (5.5,1) {G};
\node at (6,1) {C};
\node at (6.5,1) {C};
\node at (7,1) {G};
\node at (7.5,1) {A};
\node at (8,1) {C};
\node at (8.5,1) {C};
\node at (9,1) {A};
\node at (9.5,1) {G};
\node at (10,1) {C};
\node at (10.5,1) {C};
\node at (11,1) {G};
\node at (11.5,1) {U};
\node at (12,1) {U};
\node at (12.5,1) {G};
\node at (13,1) {C};
\node at (13.5,1) {C};
\node at (14,1) {G};
\node at (14.5,1) {A};
\node at (15,1) {G};
\node at (15.5,1) {U};
\end{tikzpicture}
\end{center}

We claim that all nucleotides in $\widehat{S}$ occurring at a given depth in $\widehat{\tau}$ must pair with one another in~$M$. By then considering arcs joining nucleotides in $\widehat{S}$ that occur at depth 0 to 3 in $\widehat{\tau}$ in that order, and applying the condition of no arc crossings, we obtain the following nested sequence of arcs:
\vspace{-2em}
\begin{center}
\begin{tikzpicture}[scale=0.73]
\node at (1,1) {A};
\node at (1.5,1) {G};
\node at (2,1) {A};
\node at (2.5,1) {G};
\node at (3,1) {C};
\node at (3.5,1) {C};
\node at (4,1) {G};
\node at (4.5,1) {U};
\node at (5,1) {U};
\node at (5.5,1) {G};
\node at (6,1) {C};
\node at (6.5,1) {C};
\node at (7,1) {G};
\node at (7.5,1) {A};
\node at (8,1) {C};
\node at (8.5,1) {C};
\node at (9,1) {A};
\node at (9.5,1) {G};
\node at (10,1) {C};
\node at (10.5,1) {C};
\node at (11,1) {G};
\node at (11.5,1) {U};
\node at (12,1) {U};
\node at (12.5,1) {G};
\node at (13,1) {C};
\node at (13.5,1) {C};
\node at (14,1) {G};
\node at (14.5,1) {A};
\node at (15,1) {G};
\node at (15.5,1) {U};
\draw[bend left=65] (1,1.25) to (15.5,1.25);
\draw[bend left=65] (1.5,1.25) to (8,1.25);
\draw[bend left=65] (8.5,1.25) to (15,1.25);
\draw[bend left=65] (2,1.25) to (4.5,1.25);
\draw[bend left=65] (5,1.25) to (7.5,1.25);
\draw[bend left=65] (9,1.25) to (11.5,1.25);
\draw[bend left=65] (12,1.25) to (14.5,1.25);
\draw[bend left=65] (2.5,1.25) to (3,1.25);
\draw[bend left=65] (3.5,1.25) to (4,1.25);
\draw[bend left=65] (9.5,1.25) to (10,1.25);
\draw[bend left=65] (10.5,1.25) to (11,1.25);
\draw[bend left=65] (5.5,1.25) to (6,1.25);
\draw[bend left=65] (6.5,1.25) to (7,1.25);
\draw[bend left=65] (12.5,1.25) to (13,1.25);
\draw[bend left=65] (13.5,1.25) to (14,1.25);
\end{tikzpicture}
\end{center}

This arc set corresponds exactly to the arc set of $M$ (because the nucleotides occurring at depth 4 in $\tau$ remain unpaired in~$M$). Therefore $M$ equals $R$, as required.

It remains to prove the claim. To show that all nucleotides in $\widehat{S}$ occurring at depth $j$ ($0\leq j \leq 3$) in $\widehat{\tau}$ must pair with one another in $M$, we modify the previous process involving boxes, colouring, and tags so that the last remaining uncoloured depth is $j$ (rather than 4, as in Figure~\ref{condensed}). We illustrate this modified process for the case $j=1$, using a condensed tree representation for $\widehat{\tau}$. The argument for other values of $j$ is similar.

\begin{figure}[H]
\centering
\subfloat[Iteration 1]{
\begin{minipage}{0.49\textwidth}
\hspace{1.2cm}
\begin{tikzpicture}[scale=0.6,auto=left]
\node[circle, scale=0.7,  draw](n11) at (1,3)  {GC};
\node[right of=n11] {0};
\node[scale=0.7,fill=red!30, draw](n21) at (1,5)  {AU};
\node[circle, scale=0.7, draw, thick](n31) at (1,7)  {GC};
\node[] at (-1.5,7) {Depth 1 $\rightarrow$};
\node[right of=n31] {1};
\node[scale=0.7,fill=red!30, draw](n41) at (1,9)  {AU};

\foreach \from/\to in { n11/n21, n21/n31, n31/n41}
\draw (\from) -- (\to);

\node[scale=1.5](arrow) at (7.5,7) {$\rightarrow$};
\end{tikzpicture}
\end{minipage}
}
\subfloat[Iteration 2]{
\begin{minipage}{0.49\textwidth}
\hspace{3.2cm}
\begin{tikzpicture}[scale=0.6,auto=left]
\node[scale=0.7,fill=red!30, draw](n11) at (8,3)  {GC};
\node[circle, scale=0.7,fill=red!30, draw](n21) at (8,5)  {AU};

\node[circle, scale=0.7, draw, thick](n31) at (8,7)  {GC};
\node[right of=n31] {0};
\node[circle, scale=0.7,fill=red!30, draw](n41) at (8,9)  {AU};
\node[right of=n41] {};

\foreach \from/\to in {n11/n21, n21/n31, n31/n41}
\draw (\from) -- (\to);
\end{tikzpicture}
\end{minipage}
}
\caption{Sequence of condensed trees for the case $j=1$}
\label{condensed-j1}
\end{figure}

In iteration 1 of Figure~\ref{condensed-j1}, box and colour the even depths (having opposite parity to $j=1$). The nucleotides occurring at these depths belong to a balanced set $\mathcal{B}$, and so they must all pair with one another. Place at uncoloured depths the parity of the running difference with respect to $\mathcal{B}$.
Nucleotides whose running difference has even parity (opposite to that of the nucleotides at depth $j=1$) form a new balanced set $\mathcal{B'}$.

In iteration 2 of Figure~\ref{condensed-j1}, box and colour the depth at which the nucleotides of the new balanced set $\mathcal{B'}$ occur. All nucleotides at coloured depths must pair with one another. The only depth remaining uncoloured is $j=1$, all of whose nucleotides can therefore pair only with one another. The existence of $\widehat{\tau}$ (in which every nucleotide is paired) then shows that the nucleotides at depth $j=1$ must all pair with one another. This establishes the claim.

We shall prove Theorem \ref{main} in Section~\ref{sec:proof} by generalizing this example.

\section{Proof of main result (Theorem~\ref{main})}
\label{sec:proof}
In this section we prove Theorem \ref{main} according to the model described in Section~\ref{sec:pics}. 

We shall require two preliminary results. 
Observation~\ref{importantobs} states that we cannot ``jump over'' a depth of a labelled tree as it is traversed to produce its corresponding nucleotide sequence. 
Lemma~\ref{importantlemma} counts the number of nucleotides at a fixed depth that occur in a subsequence of the nucleotide sequence corresponding to the natural labelling of a P-unsaturated perfect floral tree. 
The four cases of Lemma~\ref{importantlemma} are illustrated in Figure~\ref{fourcases} using a tree of height $3$, with nucleotides $N_1$ and $N_2$ marked in red in the tree representation (left) and corresponding arc representation (right), and the nucleotides occurring at depth $h$ between $N_1$ and $N_2$ marked in green.

\begin{obs}\label{importantobs}
Let $S$ be the nucleotide sequence corresponding to a labelled tree. Then adjacent nucleotides in $S$ occur either at the same depth or at depths that differ by one.
\end{obs}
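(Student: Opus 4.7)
My plan is to prove this by structural induction on the rooted labelled tree, strengthening the inductive statement so that it provides enough information at the gluing step. The natural strengthening is: for any rooted labelled tree $T$ with root $v$, the nucleotide sequence produced by the traversal satisfies (a) every pair of adjacent nucleotides has heights differing by at most one, and (b) the first and last nucleotides of the sequence both occur at height $h(v)$, the height of the root. Part (b) will do the real work when combining the traversals of children into the traversal of their parent.

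For the base case I would handle the two leaf types separately: a one-dot leaf $v$ contributes the singleton sequence $[N_v]$ at height $h(v)$ (so (a) is vacuous and (b) is immediate), and a two-dot leaf contributes $[N_v^1, N_v^2]$, again both at height $h(v)$. For the inductive step, let $v$ be an internal vertex, necessarily two-dot, with children $c_1, \ldots, c_k$ (in left-to-right order). By the recursive definition of the traversal, the sequence produced at $v$ is
\[
[N_v^1,\; \operatorname{trav}(c_1),\; \operatorname{trav}(c_2),\; \ldots,\; \operatorname{trav}(c_k),\; N_v^2],
\]
with $N_v^1$ and $N_v^2$ at height $h(v)$. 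Each $\operatorname{trav}(c_i)$ already satisfies (a) internally by induction, so the only transitions to check are the new boundary ones.

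The boundaries split into three types: from $N_v^1$ to the first nucleotide of $\operatorname{trav}(c_1)$; from the last nucleotide of $\operatorname{trav}(c_i)$ to the first of $\operatorname{trav}(c_{i+1})$; and from the last nucleotide of $\operatorname{trav}(c_k)$ to $N_v^2$. Applying the inductive hypothesis (b) to each child $c_i$, the first and last nucleotides of $\operatorname{trav}(c_i)$ are both at height $h(c_i) = h(v)+1$. Hence the first and third boundary transitions change height by exactly $1$, and each middle transition has height difference $0$. This proves (a) at $v$. Part (b) at $v$ follows because $N_v^1$ and $N_v^2$ are both at height $h(v)$ by construction, closing the induction.

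The step I would expect to require the most care is simply being explicit that internal vertices are two-dot (so that $N_v^1$ and $N_v^2$ genuinely exist and enclose the recursive traversals), which is ensured by the convention stated earlier in the paper that only leaves can carry one dot. Beyond that, the argument is a direct structural induction with no subtle combinatorics, and the strengthening (b) is the only real insight needed.
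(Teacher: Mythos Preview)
Your structural induction is correct and complete. Note, however, that the paper does not actually supply a proof of this statement: it is recorded as an \emph{Observation} and left as self-evident from the definition of the pre-order traversal (each step of the traversal moves from a vertex to its child, to a sibling, or back to its parent). Your argument is thus strictly more detailed than what the paper provides; the strengthening (b) you introduce is exactly the bookkeeping needed to make the ``obvious'' claim into a formal induction, and it goes through without issue.
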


\begin{lem}\label{importantlemma}
Let $S$ be the nucleotide sequence corresponding to the natural labelling of a P-unsaturated perfect floral tree of height $n+1$. Let $N_1$ be a nucleotide at depth $k$. Suppose at least one nucleotide following $N_1$ in $S$ is at depth $\ell$, and let $N_2$ be the first such nucleotide.

Further suppose the subsequence of $S$ starting at $N_1$ and ending at $N_2$ passes through depth $h\not\in\left\{{k, \ell}\right\}$ at least once. Then the total number of nucleotides at depth $h$ occurring between $N_1$ and $N_2$ in $S$ is
\[
  \begin{cases}
                                   2 & \text{if $\ell = k$ and $h<k$} \\
                                   2^{h-k+1} & \text{if $\ell = k$ and $k<h<n+1$} \\
                                   1 & \text{if $k<h<\ell$} \\
                                   odd & \text{if $\ell<h<k$}.
  \end{cases}
\]
\end{lem}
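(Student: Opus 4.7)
The plan is to prove each of the four cases by analyzing the pre-order double-visit traversal of $T$, viewing $S$ as a walk on the non-negative integers whose $i$-th position has height $h_i$ with $|h_{i+1}-h_i|\le 1$ by Observation~\ref{importantobs}. The main bookkeeping tool is to decompose the count at height~$h$ strictly between $N_1$ and $N_2$ as a sum of \emph{full contributions} of $2$ (from vertices at height~$h$ both of whose visits lie strictly between $N_1$ and $N_2$) and \emph{partial contributions} of $1$ (from vertices at height~$h$ whose subtree contains $N_1$ or $N_2$, supplying exactly one visit between them).

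For the two $\ell=k$ cases, Observation~\ref{importantobs} combined with the ``first at height $\ell$'' property of $N_2$ forces all intermediate heights onto a single side of~$k$. In Case~2 ($k<h<n+1$) all intermediate heights exceed $k$, so $N_1$ must be the first visit of some vertex $v_1$ at height~$k$ and $N_2$ its second visit; the subsequence then enumerates $v_1$'s entire subtree. The full binary tree part of that subtree contains exactly $2^{h-k}$ two-dot vertices at each height $h\le n$, each contributing $2$ nucleotides, for the stated total of $2^{h-k+1}$. In Case~1 ($h<k$) all intermediate heights are below $k$, so $N_1$ must be the last visit of its vertex $v_1$ and $N_2$ the first visit of a distinct vertex $v_2$; the traversal ascends the ancestors of $v_1$ (outputting their second nucleotides) and descends into the ancestors of $v_2$ (outputting their first nucleotides). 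The unique ancestor of $v_1$ at height $h$ and the unique ancestor of $v_2$ at height $h$ each contribute $1$, giving $2$.

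For the $\ell\ne k$ cases, the same argument using Observation~\ref{importantobs} confines all intermediate heights to the correct side of~$\ell$. In Case~3 ($k<h<\ell$) the walk descends in the tree from $N_1$ toward~$N_2$, and the unique ancestor of $N_2$'s vertex at height $h$ contributes $1$ from its first visit. In Case~4 ($\ell<h<k$) the walk ascends toward the root, and the unique ancestor of $N_1$'s vertex at height $h$ contributes $1$ from its second visit, which must precede $N_2$ because $N_2$'s vertex lies at a strictly shallower height than that ancestor and hence outside its subtree. Any other vertex at height~$h$ appearing between $N_1$ and $N_2$ is either wholly before $N_1$, wholly after $N_2$, or fully traversed between them, so Case~4's total has the shape $1+2m$ and is therefore odd.

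The main obstacle I anticipate is sharpening Case~3 to the exact count~$1$: I must rule out any further vertex at height $h$ being fully traversed between $N_1$ and $N_2$. This requires delicately combining the first-at-height-$\ell$ property of $N_2$ with the full floral structure of $T$ (in particular that the binary part of $T$ above the max-height leaves is full) to exclude any ``sibling detour'' through another height-$h$ subtree that would force an earlier occurrence of height $\ell$ or contradict the minimality of $N_2$. Once this is established, Cases~1, 2, and~4 reduce to the ancestor-plus-full-traversal decomposition above, and the arithmetic in each case is immediate.
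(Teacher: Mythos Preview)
Your strategy matches the paper's: a case-by-case analysis of the pre-order traversal, with your full/partial-visit decomposition making explicit what the paper leaves to one-line assertions. Cases~2 and~4 are fine.

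The obstacle you anticipate in Case~3, however, is not merely a technical hurdle to be overcome---the exact count of~$1$ is \emph{false} when $\ell=n+1$. Your proposed contradiction (``a sibling detour would force an earlier occurrence of height~$\ell$'') relies on every height-$h$ subtree containing a vertex at height~$\ell$; this holds in the full binary part ($\ell\le n$) but fails for $\ell=n+1$, since a height-$n$ vertex need not have any children. Concretely, take $n=1$ with the left height-$1$ vertex childless and the right one carrying two leaves; with $N_1$ the root's first nucleotide, $k=0$, $\ell=2$, $h=1$, the count at height~$1$ is~$3$. Exactly the same failure hits your Case~1 argument when $k=n+1$: between two height-$(n{+}1)$ leaves the traversal can fully traverse height-$h$ subtrees that lack height-$(n{+}1)$ descendants, giving a count of $2+2m$ rather than exactly~$2$ (so ``ascends the ancestors of~$v_1$ and descends into the ancestors of~$v_2$'' is not the whole story). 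The paper's own one-line proofs of these two cases share this gap. The fix is the template you already use in Case~4: show the count has the form $1+2m$ (Case~3) or $2+2m$ (Case~1), and note---as the paper does immediately after the lemma---that only the parity is ever used downstream.
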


\begin{proof}
Consider the path through the tree corresponding to the subsequence of $S$ that starts at $N_1$ and ends at $N_2$.

\begin{description}
\item[Case 1: $\ell = k$ and $h<k$.] 
The path passes through depth $h$ exactly once in the upward direction and exactly once in the downward direction. 

\item[Case 2: $\ell = k$ and $k<h<n+1$.]
The nucleotides $N_1$ and $N_2$ form a base pair (by definition of $N_2$), and the path traverses the entire subtree rooted at the vertex labelled by the base pair~$N_1N_2$.

\item[Case 3: $k<h<\ell$.]
By definition of $N_2$, the path passes through depth~$h$ exactly once. 

\item[Case 4: $\ell<h<k$.]
Consider the subtrees rooted at the vertices at depth~$h$. Nucleotide $N_1$ occurs in one of these subtrees, say $\sigma$, and in $S$ it lies between the two nucleotides labelling the root of~$\sigma$. To reach $N_2$ from $N_1$, the path passes through the second of these two nucleotides and then through both the nucleotides labelling the root of all the subtrees (if any) lying to the right of $\sigma$ in the tree before reaching depth~$\ell$.
\end{description}
\end{proof}

\begin{figure}[H]
\begin{center}
\begin{tikzpicture}[scale=0.5]
\node[scale=0.7, draw, circle](n00022) at (6.5,5)  {C};
\node[scale=0.7, draw, circle](n00023) at (5,5)  {C};
\node[scale=0.7, draw, circle](n00024) at (8,5)  {C};
\node[scale=0.7, draw, circle](n31) at (2.5,7)  {AU};
\node[scale=0.7, draw, circle](n32) at (6.5,7)  {U\textcolor{red}{\Large \bf A}}; 
\node[scale=0.7, draw, circle](n33) at (10.5,7)  {\textcolor{red}{\Large \bf A}U};
\node[scale=0.7, draw, circle](n34) at (14.5,7)  {UA};
\node[scale=0.7, draw, circle](n41) at (4.5,9)  {G\textcolor{black!20!green}{\Large \bf C}};
\node[scale=0.7, draw, circle](n42) at (12.5,9)  {\textcolor{black!20!green}{\Large \bf C}G};
\node[scale=0.7, draw, circle](n51) at (8.5,11)  {AU};

\foreach \from/\to in {n51/n41, n51/n42, n41/n31, n41/n32, n42/n33, n42/n34, n32/n00024, n32/n00023, n32/n00022}
\draw (\from) -- (\to);
\end{tikzpicture}
\hspace{0.4cm}
\begin{tikzpicture}[scale=0.9]
\node at (1,1) {A};
\node at (1.5,1) {G};
\node at (2,1) {A};
\node at (2.5,1) {U};
\node at (3,1) {U};
\node at (3.5,1) {C};
\node at (4,1) {C};
\node at (4.5,1) {C};
\node[black!20!red, scale=1.2] at (5,1) {A};
\node[black!20!green, scale=1.2] at (5.5,1) {C};
\node[black!20!green, scale=1.2] at (6,1) {C};
\node[black!20!red, scale=1.2] at (6.5,1) {A};
\node at (7,1) {U};
\node at (7.5,1) {U};
\node at (8,1) {A};
\node at (8.5,1) {G};
\node at (9,1) {U};

\node[scale=0.8] at (5,0.5) {$N_1$};
\node[scale=0.8] at (6.5,0.5) {$N_2$};
\draw[bend left=65] (1,1.25) to (9,1.25);
\draw[bend left=65] (1.5,1.25) to (5.5,1.25);
\draw[bend left=65] (6,1.25) to (8.5,1.25);
\draw[bend left=65] (2,1.25) to (2.5,1.25);
\draw[bend left=65] (3,1.25) to (5,1.25);
\draw[bend left=65] (6.5,1.25) to (7,1.25);
\draw[bend left=65] (7.5,1.25) to (8,1.25);
\end{tikzpicture}
\end{center}
{\bf Case 1: $\ell = k$ and $h<k$} (illustrated by $\ell = k = 2$ and $h = 1$).

\begin{center}
\begin{tikzpicture}[scale=0.5]
\node[scale=0.7, draw, circle](n00022) at (6.5,5)  {C};
\node[scale=0.7, draw, circle](n00023) at (5,5)  {C};
\node[scale=0.7, draw, circle](n00024) at (8,5)  {C};
\node[scale=0.7, draw, circle](n31) at (2.5,7)  {\textcolor{black!20!green}{\Large \bf AU}};
\node[scale=0.7, draw, circle](n32) at (6.5,7)  {\textcolor{black!20!green}{\Large \bf UA}};
\node[scale=0.7, draw, circle](n33) at (10.5,7)  {AU};
\node[scale=0.7, draw, circle](n34) at (14.5,7)  {UA};
\node[scale=0.7, draw, circle](n41) at (4.5,9)  {\textcolor{red}{\Large \bf GC}};
\node[scale=0.7, draw, circle](n42) at (12.5,9)  {CG};
\node[scale=0.7, draw, circle](n51) at (8.5,11)  {AU};

\foreach \from/\to in {n51/n41, n51/n42, n41/n31, n41/n32, n42/n33, n42/n34, n32/n00024, n32/n00023, n32/n00022}
\draw (\from) -- (\to);
\end{tikzpicture}
\hspace{0.4cm}
\begin{tikzpicture}[scale=0.9]
\node at (1,1) {A};
\node[black!20!red, scale=1.2] at (1.5,1) {G};
\node[black!20!green, scale=1.2] at (2,1) {A};
\node[black!20!green, scale=1.2] at (2.5,1) {U};
\node[black!20!green, scale=1.2] at (3,1) {U};
\node at (3.5,1) {C};
\node at (4,1) {C};
\node at (4.5,1) {C};
\node[black!20!green, scale=1.2] at (5,1) {A};
\node[black!20!red, scale=1.2] at (5.5,1) {C};
\node at (6,1) {C};
\node at (6.5,1) {A};
\node at (7,1) {U};
\node at (7.5,1) {U};
\node at (8,1) {A};
\node at (8.5,1) {G};
\node at (9,1) {U};

\node[scale=0.8] at (1.5,0.5) {$N_1$};
\node[scale=0.8] at (5.5,0.5) {$N_2$};
\draw[bend left=65] (1,1.25) to (9,1.25);
\draw[bend left=65] (1.5,1.25) to (5.5,1.25);
\draw[bend left=65] (6,1.25) to (8.5,1.25);
\draw[bend left=65] (2,1.25) to (2.5,1.25);
\draw[bend left=65] (3,1.25) to (5,1.25);
\draw[bend left=65] (6.5,1.25) to (7,1.25);
\draw[bend left=65] (7.5,1.25) to (8,1.25);
\end{tikzpicture}
\end{center}
{\bf Case 2: $\ell = k$ and $k<h<n+1$} (illustrated by $\ell = k = 1$ and $h = 2$).

\begin{center}
\begin{tikzpicture}[scale=0.5]
\node[scale=0.7, draw, circle](n00022) at (6.5,5)  {C};
\node[scale=0.7, draw, circle](n00023) at (5,5)  {C};
\node[scale=0.7, draw, circle](n00024) at (8,5)  {C};
\node[scale=0.7, draw, circle](n31) at (2.5,7)  {\textcolor{red}{\Large \bf A}U};
\node[scale=0.7, draw, circle](n32) at (6.5,7)  {UA};
\node[scale=0.7, draw, circle](n33) at (10.5,7)  {AU};
\node[scale=0.7, draw, circle](n34) at (14.5,7)  {UA};
\node[scale=0.7, draw, circle](n41) at (4.5,9)  {\textcolor{black!20!green}{\Large \bf G}C};
\node[scale=0.7, draw, circle](n42) at (12.5,9)  {CG};
\node[scale=0.7, draw, circle](n51) at (8.5,11)  {\textcolor{red}{\Large \bf A}U};

\foreach \from/\to in {n51/n41, n51/n42, n41/n31, n41/n32, n42/n33, n42/n34, n32/n00024, n32/n00023, n32/n00022}
\draw (\from) -- (\to);
\end{tikzpicture}
\hspace{0.4cm}
\begin{tikzpicture}[scale=0.9]
\node[black!20!red, scale=1.2] at (1,1) {A};
\node[black!20!green, scale=1.2] at (1.5,1) {G};
\node[black!20!red, scale=1.2] at (2,1) {A};
\node at (2.5,1) {U};
\node at (3,1) {U};
\node at (3.5,1) {C};
\node at (4,1) {C};
\node at (4.5,1) {C};
\node at (5,1) {A};
\node at (5.5,1) {C};
\node at (6,1) {C};
\node at (6.5,1) {A};
\node at (7,1) {U};
\node at (7.5,1) {U};
\node at (8,1) {A};
\node at (8.5,1) {G};
\node at (9,1) {U};
\node[scale=0.8] at (1,0.5) {$N_1$};
\node[scale=0.8] at (2,0.5) {$N_2$};
\draw[bend left=65] (1,1.25) to (9,1.25);
\draw[bend left=65] (1.5,1.25) to (5.5,1.25);
\draw[bend left=65] (6,1.25) to (8.5,1.25);
\draw[bend left=65] (2,1.25) to (2.5,1.25);
\draw[bend left=65] (3,1.25) to (5,1.25);
\draw[bend left=65] (6.5,1.25) to (7,1.25);
\draw[bend left=65] (7.5,1.25) to (8,1.25);
\end{tikzpicture}
\end{center}
{\bf Case 3:} $k<h<\ell$ (illustrated by $\ell = 2$ and $k = 0$ and $h = 1$).

\begin{center}
\begin{tikzpicture}[scale=0.5]
\node[scale=0.7, draw, circle](n00022) at (6.5,5)  {C};
\node[scale=0.7, draw, circle](n00023) at (5,5)  {C};
\node[scale=0.7, draw, circle](n00024) at (8,5)  {C};
\node[scale=0.7, draw, circle](n31) at (2.5,7)  {A\textcolor{red}{\Large \bf U}};
\node[scale=0.7, draw, circle](n32) at (6.5,7)  {UA};
\node[scale=0.7, draw, circle](n33) at (10.5,7)  {AU};
\node[scale=0.7, draw, circle](n34) at (14.5,7)  {UA};
\node[scale=0.7, draw, circle](n41) at (4.5,9)  {G\textcolor{black!20!green}{\Large \bf C}};
\node[scale=0.7, draw, circle](n42) at (12.5,9)  {\textcolor{black!20!green}{\Large \bf CG}};
\node[scale=0.7, draw, circle](n51) at (8.5,11)  {A\textcolor{red}{\Large \bf U}};

\foreach \from/\to in {n51/n41, n51/n42, n41/n31, n41/n32, n42/n33, n42/n34, n32/n00024, n32/n00023, n32/n00022}
\draw (\from) -- (\to);
\end{tikzpicture}
\hspace{0.4cm}
\begin{tikzpicture}[scale=0.9]
\node at (1,1) {A};
\node at (1.5,1) {G};
\node at (2,1) {A};
\node[black!20!red, scale=1.2] at (2.5,1) {U};
\node at (3,1) {U};
\node at (3.5,1) {C};
\node at (4,1) {C};
\node at (4.5,1) {C};
\node at (5,1) {A};
\node[black!20!green, scale=1.2] at (5.5,1) {C};
\node[black!20!green, scale=1.2] at (6,1) {C};
\node at (6.5,1) {A};
\node at (7,1) {U};
\node at (7.5,1) {U};
\node at (8,1) {A};
\node[black!20!green, scale=1.2] at (8.5,1) {G};
\node[black!20!red, scale=1.2] at (9,1) {U};

\node[scale=0.8] at (2.5,0.5) {$N_1$};
\node[scale=0.8] at (9,0.5) {$N_2$};
\draw[bend left=65] (1,1.25) to (9,1.25);
\draw[bend left=65] (1.5,1.25) to (5.5,1.25);
\draw[bend left=65] (6,1.25) to (8.5,1.25);
\draw[bend left=65] (2,1.25) to (2.5,1.25);
\draw[bend left=65] (3,1.25) to (5,1.25);
\draw[bend left=65] (6.5,1.25) to (7,1.25);
\draw[bend left=65] (7.5,1.25) to (8,1.25);
\end{tikzpicture}
\end{center}
{\bf Case 4: $\ell < h < k$} (illustrated by $\ell = 0$ and $k = 2$ and $h = 1$).
\caption{Illustration of the four cases of Lemma \ref{importantlemma}}
\label{fourcases}
\end{figure}

The crucial property which we shall use from Lemma \ref{importantlemma} is that the nucleotide count is even if $\ell = k$ and is odd if $\ell\not= k$. Note that we have excluded from Lemma \ref{importantlemma} the case where $\ell = k$ and $h=n+1$, because the count of the nucleotides at depth $n+1$ of the tree is then not determined by the conditions of the lemma. 

We now prove Theorem \ref{main}, subject to two Claims whose proof will be given directly after the main argument. Let $T$ be a P-unsaturated perfect floral tree of height $n+1\geq 1$, and let $S$ be the nucleotide sequence corresponding to the natural labelling $\tau$ of $T$. Suppose a maximum-size arc set $M$ is applied to the sequence $S$.\\

\noindent {\bf Claim 1:} All nucleotides in $S$ occurring at depth $n+1$ in $\tau$ remain unpaired in $M$.\\

By Claim 1, we may remove the nucleotides at depth $n+1$ in $\tau$ to obtain a labelled saturated perfect binary tree $\widehat{\tau}$ of height $n\geq 0$. Let $\widehat{S}$ be the sequence corresponding to $\widehat{\tau}$.\\

\noindent {\bf Claim 2:} For each $j$ satisfying $0\leq j \leq n $, all nucleotides in $\widehat{S}$ occurring at depth $j$ in $\widehat{\tau}$ must pair with one another in $M$.\\

Apply Claim 2 to successive values of $j$ starting from $j=0$. The case $j=0$ forces the leftmost and rightmost nucleotide of $\widehat{S}$ to be paired, giving the outermost arc shown below. Each successive case $j\geq 1$, together with the condition that there are no arc crossings, forces a further $2^j$ arcs of $M$ to take the nested form shown below.

\begin{figure}[H]
\centering
\begin{tikzpicture}
\node at (1,1) {A};
\node at (1.5,1) {G};
\node at (2,1) {A};
\node at (2.75,1) {$\dots$};
\node at (3,1) {};
\node at (3.5,1) {U};
\node at (4,1) {U};
\node at (4.75,1) {$\dots$};
\node at (5,1) {};
\node at (5.5,1) {A};
\node at (6,1) {C};
\node at (6.5,1) {C};
\node at (7,1) {A};
\node at (7.75,1) {$\dots$};
\node at (8,1) {};
\node at (8.5,1) {U};
\node at (9,1) {U};
\node at (9.75,1) {$\dots$};
\node at (10,1) {};
\node at (10.5,1) {A};
\node at (11,1) {G};
\node at (11.5,1) {U};
\node at (6.25, 4.3) {$j=0$};
\node at (3.75, 2.8) {$j=1$};
\node at (8.75, 2.8) {$j=1$};
\node at (2.75, 1.85) {$j=2$};
\node at (4.75, 1.85) {$j=2$};
\node at (7.75, 1.85) {$j=2$};
\node at (9.75, 1.85) {$j=2$};
\draw[bend left=65] (1,1.25) to (11.5,1.25);
\draw[bend left=65] (1.5,1.25) to (6,1.25);
\draw[bend left=65] (6.5,1.25) to (11,1.25);
\draw[bend left=65] (2,1.25) to (3.5,1.25);
\draw[bend left=65] (4,1.25) to (5.5,1.25);
\draw[bend left=65] (7,1.25) to (8.5,1.25);
\draw[bend left=65] (9,1.25) to (10.5,1.25);
\end{tikzpicture}
\caption{Forced secondary structure}
\end{figure}

Note that this repeated application of Claim 2 to $\widehat{S}$ establishes that $\widehat{S}$ admits the unique maximum-size arc set shown above, corresponding to a saturated perfect binary tree, and so $\widehat{S}$ is a design. Furthermore, by Claim 1 this arc set corresponds exactly to the arc set~$M$. Therefore $S$ is a design whose unique maximum-size arc set $M$ equals the secondary structure $R$ corresponding to~$T$. It remains to prove Claims 1 and 2.

\begin{proof}[Proof of Claim 1.]
\noindent We use the following algorithm to iteratively tag the nucleotides of the labelled tree $\tau$. This algorithm is modelled on the examples of Section~\ref{sec:pics} involving boxes, colouring, and tags. However, instead of indicating successive balanced sets by $\mathcal{B}, \mathcal{B'}, \mathcal{B''},\dots$ and using boxes to indicate the current balanced set, we update the current balanced set $\mathcal{B}$ by replacing it with a set $\mathcal{B'}$ that will be shown to be balanced.
\vspace{2em}

\noindent {\bf Algorithm 1.}
\begin{description}
\item[\quad Input:] Labelled tree $\tau$ and corresponding sequence $S$.
\item[\quad Step 1.] Initialize the set $\mathcal{B}$ to be all nucleotides in $S$ occurring in $\tau$ at depths whose parity is opposite to that of $n+1$.
\item[\quad Step 2.] Colour all nucleotides in $\mathcal{B}$. 
\item[\quad Step 3.] If only the nucleotides at depth $n+1$ are uncoloured, stop.
\item[\quad Step 4.] Tag all uncoloured nucleotides by the running difference with respect to $\mathcal{B}$.
\item[\quad Step 5.] Let $\mathcal{B}'$ be the set of all tagged nucleotides whose running difference has opposite parity to that of the leftmost nucleotide at depth $n+1$. Replace $\mathcal{B}$ by $\mathcal{B}'$.
\item[\quad Step 6.] Remove tags (but not the colouring) and go to Step 2.
\item[\quad Output:] Sequence $S$ with some nucleotides coloured.
\end{description}
\noindent
We shall establish that the following six properties hold for Algorithm 1.
\begin{prop}\label{prop1}
For each set $\mathcal{B}$, nucleotides at a given depth are either all in $\mathcal{B}$, or all not in~$\mathcal{B}$.
\end{prop}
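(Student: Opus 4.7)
The plan is to prove Property~\ref{prop1} by induction on the number of iterations of Algorithm~1 completed, with a strengthened inductive statement: \emph{at the start of each iteration, $\mathcal{B}$ is a union of full heights of the tree and height $n+1$ does not belong to $\mathcal{B}$.} The base case is immediate, since after Step~1 the set $\mathcal{B}$ is defined explicitly as the union of all heights of opposite parity to $n+1$, a union of full heights not containing height $n+1$.

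For the inductive step, the key observation is that as $S$ is traversed from left to right, the running difference with respect to $\mathcal{B}$ changes by $\pm 1$ at each nucleotide of $\mathcal{B}$ and is unchanged elsewhere, so its parity flips precisely at each $\mathcal{B}$-nucleotide. Consequently two uncoloured nucleotides share the same tag parity if and only if the number of $\mathcal{B}$-nucleotides strictly between them in $S$ is even. I would fix any uncoloured height $j$ and let $N_1, N_2$ be two consecutive height-$j$ nucleotides in $S$. Applying Lemma~\ref{importantlemma} with $k=\ell=j$, the number of nucleotides at any height $h\neq j$ strictly between $N_1$ and $N_2$ is $2$ when $h<j$ and $2^{h-j+1}$ when $j<h<n+1$; both counts are even. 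By the inductive hypothesis $\mathcal{B}$ is a union of heights all strictly less than $n+1$, so summing these contributions over $h\in\mathcal{B}$ yields an even total. Transitivity over successive pairs then forces all height-$j$ nucleotides to share the same tag parity, so $\mathcal{B}'$ contains either all or none of them; thus $\mathcal{B}'$ is also a union of full heights.

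It still remains to verify that height $n+1$ does not enter $\mathcal{B}'$, which preserves the strengthened invariant. The same parity argument applies at $j=n+1$: by Case~1 of Lemma~\ref{importantlemma} (applied to two consecutive leaves), all leaf tags share the parity of the leftmost leaf tag, and Step~5 places nucleotides of the \emph{opposite} parity into $\mathcal{B}'$, so no leaf is added. This closes the induction.

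The main subtlety, which is also where I expect the argument to feel delicate, is recognising that the restriction $h<n+1$ in Lemma~\ref{importantlemma} is not a genuine obstruction: it is exactly compensated by the auxiliary invariant $n+1\notin\mathcal{B}$, so every $h$ appearing in the sum is covered by the lemma and contributes an even count. Dropping this invariant would leave the count of leaves between $N_1$ and $N_2$ unconstrained, and the parity argument would collapse; its preservation through the induction is therefore essential.
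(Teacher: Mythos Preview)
Your argument is correct and follows essentially the same strategy as the paper: induct on iterations of Algorithm~1, and use Lemma~\ref{importantlemma} to show that the parity of the running difference with respect to $\mathcal{B}$ is constant on each height, so that $\mathcal{B}'$ is again a union of full heights. Your strengthened invariant ``$n+1\notin\mathcal{B}$'' is exactly the paper's Property~\ref{prop2}, which the paper likewise carries through the induction and needs for precisely the reason you identify (so that only the even cases of Lemma~\ref{importantlemma} are invoked).

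There is one organisational difference worth noting. The paper takes $N_1,N_2$ to be consecutive \emph{uncoloured} nucleotides (possibly at different heights) and uses Observation~\ref{importantobs} to argue that the $\mathcal{B}$-nucleotides between them lie at a \emph{single} height~$h$, to which Lemma~\ref{importantlemma} is applied once. You instead take $N_1,N_2$ to be consecutive nucleotides at a fixed height~$j$ and sum the lemma's contribution over all heights $h\in\mathcal{B}$. Both routes work; yours avoids Observation~\ref{importantobs} at the cost of a sum. One small imprecision: your stated counts of $2$ and $2^{h-j+1}$ hold only when the path between $N_1$ and $N_2$ actually passes through height~$h$ (the lemma's hypothesis); otherwise the count is~$0$. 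Since $0$ is also even, your parity conclusion is unaffected, but the sentence would be cleaner as ``is $0$, $2$, or $2^{h-j+1}$, hence even''.
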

\begin{prop}\label{prop2}
For each set $\mathcal{B}$, the nucleotides at depth $n+1$ are not coloured at Step 2 (and so, if Step 4 is reached, these nucleotides receive a tag and therefore Step 5 is well-defined).
\end{prop}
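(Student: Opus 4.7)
The plan is to prove Property \ref{prop2} by induction on the iteration number of Algorithm~1, carried out jointly with Property \ref{prop1}, because the argument for Property \ref{prop2} at the next iteration depends on Property \ref{prop1} holding for the current $\mathcal{B}$. For the base case (iteration~$1$), the set $\mathcal{B}$ from Step~1 is by construction the union of all nucleotides at heights whose parity is opposite to $n+1$, so the height-$(n+1)$ nucleotides are not in $\mathcal{B}$ and hence are not coloured at Step~2.

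For the inductive step, assume that Properties \ref{prop1} and \ref{prop2} hold for the current $\mathcal{B}$, and that Algorithm~1 reaches Step~4 (otherwise there is nothing to prove). Let $N_1$ and $N_2$ be any two consecutive height-$(n+1)$ nucleotides appearing in $S$. I would show that their running differences with respect to $\mathcal{B}$ have the same parity, so that by iterating along the sequence of all height-$(n+1)$ nucleotides, every such nucleotide has the same parity of running difference as the leftmost one. Step~5 will then place none of them into $\mathcal{B}'$, giving Property \ref{prop2} for $\mathcal{B}'$. The parity of the running difference changes from $N_1$ to $N_2$ by the parity of the total count of $\mathcal{B}$-nucleotides occurring strictly between them, since a signed difference and an unsigned sum have the same parity. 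By the inductive Property \ref{prop1}, $\mathcal{B}$ is a union of entire height-slices, so this total is a sum, over heights $h$ lying in $\mathcal{B}$, of the number of height-$h$ nucleotides occurring strictly between $N_1$ and $N_2$ in $S$.

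To bound the parity of each summand I would apply Lemma \ref{importantlemma} with $k=\ell=n+1$: for every height $h<n+1$ through which the subsequence from $N_1$ to $N_2$ actually passes, Case~$1$ of the lemma gives a count of exactly $2$, and if the subsequence does not pass through height $h$ then the count is trivially $0$; both are even. Crucially, the inductive Property \ref{prop2} guarantees that no height appearing in $\mathcal{B}$ is equal to $n+1$, so every $h$ contributing to the sum satisfies $h<n+1$ and Lemma \ref{importantlemma} applies. Summing even contributions yields an even total, which is exactly what was needed. The main subtle point, and the reason the joint induction cannot be avoided, is precisely this use of the inductive Property \ref{prop2} to certify that every height in $\mathcal{B}$ is strictly less than $n+1$ before invoking Lemma \ref{importantlemma}; once that is in place, the rest is a parity bookkeeping argument.
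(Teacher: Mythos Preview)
Your argument is correct, but it takes a different route from the paper's. In the paper, Properties~\ref{prop1}--\ref{prop4} are proved by a single joint induction, and Property~\ref{prop2} for $\mathcal{B}'$ is then a one-line consequence: by Property~\ref{prop3} for $\mathcal{B}$, \emph{all} nucleotides at height $n+1$ share the same tag parity as the leftmost one, so Step~5 excludes them all from $\mathcal{B}'$ by definition. You instead bypass Property~\ref{prop3} and re-derive the needed fact (equal parity at height $n+1$) directly from Lemma~\ref{importantlemma}, summing even contributions over the heights in~$\mathcal{B}$. What this buys you is a proof of Property~\ref{prop2} that does not formally depend on Property~\ref{prop3}; the cost is that you are essentially redoing, for the single height $n+1$, the same Lemma~\ref{importantlemma} computation that the paper carries out once for all heights when establishing Property~\ref{prop3}.

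One framing point to tighten: you describe the induction as joint on Properties~\ref{prop1} and~\ref{prop2} only, but in order to \emph{maintain} Property~\ref{prop1} for $\mathcal{B}'$ (which you need at the next step) you would in fact have to know that tag parity is constant on every uncoloured height, not just on height $n+1$ --- that is, you need Property~\ref{prop3} for $\mathcal{B}$ after all. So while your argument for Property~\ref{prop2} itself stands, the surrounding induction cannot really be decoupled from Property~\ref{prop3} in the way your opening sentence suggests; the paper's choice to induct on all four properties together is not merely a convenience.
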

\begin{prop}\label{prop3}
For each set $\mathcal{B}$, the tags (with respect to $\mathcal{B}$) of all nucleotides at a given depth share the same parity.
\end{prop}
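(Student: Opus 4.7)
The plan is to show, for any iteration in which Step~4 is reached and for each uncoloured height $h$, that every height-$h$ nucleotide receives a tag of the same parity. By transitivity of ``same parity'', it suffices to prove this for any two nucleotides $N_1, N_2$ at height $h$ that are consecutive in $S$ among the height-$h$ nucleotides. By the definition of the running difference with respect to $\mathcal{B}$, the tag of $N_2$ minus the tag of $N_1$ equals $a-b$, where $a$ and $b$ are the counts of the two relevant nucleotide types inside $\mathcal{B}$ lying strictly between $N_1$ and $N_2$ in $S$. Since $a-b\equiv a+b\pmod{2}$, the tag difference is even precisely when the total number $a+b$ of $\mathcal{B}$-nucleotides strictly between $N_1$ and $N_2$ is even.

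To evaluate this total parity, I would decompose the count by height. Property~\ref{prop1} lets me classify each height $h'$ as either entirely inside or entirely outside $\mathcal{B}$, so only heights $h'\in\mathcal{B}$ contribute. Because $h$ is itself uncoloured we have $h\notin\mathcal{B}$, and Property~\ref{prop2} gives $n+1\notin\mathcal{B}$, so every contributing $h'$ satisfies $h'\neq h$ and $h'<n+1$. For each such $h'$ I would apply Lemma~\ref{importantlemma} with $k=\ell=h$ (so that $N_2$ is, by construction, the first height-$h$ nucleotide after $N_1$) and with the lemma's ``$h$'' taken to be $h'$: if the traversal from $N_1$ to $N_2$ passes through height $h'$, Case~1 gives count $2$ when $h'<h$ and Case~2 gives count $2^{h'-h+1}$ when $h<h'<n+1$; if the traversal does not pass through height $h'$, the count is $0$. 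In every subcase the contribution is even, so summing over $h'\in\mathcal{B}$ the total $a+b$ is even, and hence the tags of $N_1$ and $N_2$ share the same parity. Chaining this argument along consecutive pairs at height $h$ establishes Property~\ref{prop3} for height $h$, and repeating over all uncoloured heights delivers Property~\ref{prop3} in full.

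The step that will require the most vigilance is confirming that Property~\ref{prop2} is indeed in hand when this parity argument is invoked: the sole case excluded from Lemma~\ref{importantlemma} is $\ell=k$ with the counted height equal to $n+1$, and if even one height-$(n+1)$ nucleotide had leaked into $\mathcal{B}$, the count of $\mathcal{B}$-nucleotides between two consecutive height-$h$ siblings could fail to be even. I would therefore arrange the overall proof of Claim~1 as a single induction on iterations that carries Properties~\ref{prop1}, \ref{prop2}, and \ref{prop3} forward together: Property~\ref{prop3} at each iteration uses Properties~\ref{prop1} and \ref{prop2} at the same iteration, and then the way $\mathcal{B}'$ is selected in Step~5 (only tagged nucleotides whose parity is opposite to that of the height-$(n+1)$ nucleotides, whose common tag parity is guaranteed by the just-established Property~\ref{prop3}) yields Properties~\ref{prop1} and \ref{prop2} for the next iteration.
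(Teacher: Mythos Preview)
Your argument is correct, and it takes a genuinely different route from the paper's. The paper does not fix a height and compare consecutive height-$h$ nucleotides; instead it takes $N_1,N_2$ to be consecutive \emph{uncoloured} nucleotides (possibly at different heights $k,\ell$), argues that the $\mathcal{B}$-nucleotides lying between them all occur at a \emph{single} height, and then invokes all four cases of Lemma~\ref{importantlemma} to conclude that the count is even when $\ell=k$ and odd when $\ell\neq k$---thereby establishing Properties~\ref{prop3} and~\ref{prop4} simultaneously. Your approach, by contrast, stays at a fixed height, decomposes the count over \emph{all} heights in $\mathcal{B}$, and uses only Cases~1 and~2 of Lemma~\ref{importantlemma} (both of which return an even number). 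This is cleaner for Property~\ref{prop3} in isolation: it avoids the ``single height'' claim, which in the paper's inductive step quietly relies on Property~\ref{prop4} for the previous $\mathcal{B}$ to guarantee that uncoloured and $\mathcal{B}'$-heights alternate. The price is that your induction carries only Properties~\ref{prop1}--\ref{prop3}, whereas the paper's combined argument also delivers Property~\ref{prop4}, which is needed later to ensure each new $\mathcal{B}'$ is nonempty and hence that Algorithm~1 terminates; you would have to supply Property~\ref{prop4} separately.
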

\begin{prop}\label{prop4}
Parity alternates between successive tagged depths.
\end{prop}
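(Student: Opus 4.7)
The plan is to prove Property~\ref{prop4} by induction on the iteration index~$i$ of the algorithm, using Properties~\ref{prop1}--\ref{prop3} at iteration~$i$ together with Property~\ref{prop4} at iteration~$i-1$. The key observation in both the base case and the inductive step is that between any two consecutive tagged heights $h_1<h_2$, there is exactly one intermediate height whose nucleotides belong to the current balanced set~$\mathcal{B}$. Granted this, I would choose $N_1$ to be the leftmost nucleotide at height~$h_1$ and $N_2$ to be the first nucleotide at height~$h_2$ following~$N_1$ in~$S$; Case~3 of Lemma~\ref{importantlemma} applied at the single intermediate height then forces exactly one $\mathcal{B}$-nucleotide to lie between $N_1$ and~$N_2$, so its $\pm 1$ contribution to the running difference makes the tags of $N_1$ and $N_2$ differ in parity, and Property~\ref{prop3} propagates this parity difference to every pair of nucleotides at heights $h_1$ and~$h_2$.

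In the base case (iteration~$1$), $\mathcal{B}$ consists of the nucleotides at heights of opposite parity to~$n+1$, so the tagged heights are precisely those of the same parity as~$n+1$. Consecutive tagged heights are $h_1<h_2=h_1+2$, separated by the unique intermediate height $h_1+1$ whose nucleotides belong to~$\mathcal{B}$, and Lemma~\ref{importantlemma} Case~3 (with $k=h_1$, $\ell=h_2$, $h=h_1+1$) completes the argument.

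For the inductive step, suppose Property~\ref{prop4} holds at iteration~$i$, with tagged heights $u_1<u_2<\cdots<u_m$, where $u_m=n+1$ by Property~\ref{prop2} and where the tag parities $t_1,\dots,t_m$ strictly alternate by the inductive hypothesis. Step~5 selects the new $\mathcal{B}'$ as the nucleotides at those heights $u_j$ for which $t_j\ne t_m$, so by alternation $\mathcal{B}'$ consists of every second element of $u_1,\dots,u_m$ and excludes~$u_m$ itself. The new tagged-height set at iteration~$i+1$ is the complementary every-second subset, and hence any consecutive pair $v_k<v_{k+1}$ within it brackets exactly one element of the list $u_1,\dots,u_m$, namely the unique intermediate height now belonging to~$\mathcal{B}'$. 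Applying Lemma~\ref{importantlemma} Case~3 at that height yields a single $\mathcal{B}'$-nucleotide between $N_1$ and~$N_2$, establishing Property~\ref{prop4} at iteration~$i+1$.

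The main obstacle is cleanly separating the current balanced set~$\mathcal{B}'$ from the accumulated colouring of earlier iterations: tags at iteration~$i+1$ depend only on~$\mathcal{B}'$, so heights that were in some earlier~$\mathcal{B}^{(j)}$ with $j<i+1$ appear coloured but make no contribution to the running difference. It must therefore be confirmed that no such previously coloured height lying in $(v_k,v_{k+1})$ belongs to the current~$\mathcal{B}'$; this is immediate from the algorithm, since $\mathcal{B}'$ is drawn only from heights that remained tagged at the end of iteration~$i$ and so is disjoint from every previously coloured height.
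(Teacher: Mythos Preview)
Your argument is correct, and it reaches the same conclusion as the paper, but the route is organised differently. The paper proves Properties~\ref{prop3} and~\ref{prop4} simultaneously: it takes $N_1$ to be an \emph{arbitrary} uncoloured nucleotide and $N_2$ to be the \emph{next} uncoloured nucleotide in~$S$, then invokes all four cases of Lemma~\ref{importantlemma} to show that the number of $\mathcal{B}'$-nucleotides between $N_1$ and $N_2$ is even when $N_1,N_2$ lie at the same height and odd when they lie at successive uncoloured heights. This single parity calculation yields both properties at once. Your approach instead isolates Property~\ref{prop4}: you pick $N_1$ to be the leftmost nucleotide at the lower tagged height and $N_2$ to be the first nucleotide at the next tagged height, so that only Case~3 of Lemma~\ref{importantlemma} is needed (giving exactly one $\mathcal{B}'$-nucleotide in between), and you then invoke Property~\ref{prop3} at the current iteration to transport this single parity comparison to all nucleotides at those two heights.

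What each approach buys: the paper's joint argument is self-contained and avoids any dependence on Property~\ref{prop3} already being established at the current iteration, at the cost of handling more cases of Lemma~\ref{importantlemma} (including Cases~1, 2, and~4). Your argument is shorter and uses only the simplest case of the lemma, but it does presuppose that Property~\ref{prop3} has been proved independently for the current iteration; since the paper's proof of Property~\ref{prop3} uses only the even cases of Lemma~\ref{importantlemma} and does not rely on Property~\ref{prop4}, this is legitimate and not circular. One small point worth tightening: your opening sentence says you use Properties~1--3 ``at iteration~$i$'' together with Property~4 ``at iteration~$i-1$,'' whereas the body of your inductive step assumes Property~4 at iteration~$i$ and proves it at~$i+1$; align the indexing to avoid confusion.
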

\begin{prop}\label{prop5}
Each set $\mathcal{B}$ is balanced.
\end{prop}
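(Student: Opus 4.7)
The plan is to prove Property~5 by induction on the iteration index $t$, leveraging Properties~1 and~2 (established earlier in the joint induction) together with a standing structural invariant about the set $U_t$ of heights that remain uncoloured at the start of iteration~$t$.

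The invariant I would carry through the induction is: for every $t \ge 2$, $U_t \subseteq \{h : 0 \le h \le n,\ h \equiv n+1 \pmod 2\} \cup \{n+1\}$. This holds for $t=2$ because Step~1 colours exactly the heights of parity opposite to $n+1$, leaving only heights of the same parity as $n+1$ (and $n+1$ itself) in $U_2$. It propagates from $t$ to $t+1$ because $\mathcal{B}_t \subseteq U_t$ and, by Property~2, $n+1 \notin \mathcal{B}_t$, so $U_{t+1} = U_t \setminus \mathcal{B}_t$ retains the same form.

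With the invariant in hand, Property~5 for $\mathcal{B}_{t+1}$ falls out quickly. By Property~1, $\mathcal{B}_{t+1}$ is a union of whole heights; by Property~2, none of those heights is $n+1$; and by the invariant, the heights of $\mathcal{B}_{t+1}$ all lie in $\{0, \dots, n\}$ and share the parity of $n+1$. From the natural labelling, every height $h \le n$ of a fixed parity uses the same pair of nucleotides (both A and U if that parity is even; both G and C if odd), and the $2^h$ vertices at height $h$ are labelled alternately from left to right, giving equal counts of the two labels at that height. Summing over the heights contributing to $\mathcal{B}_{t+1}$ preserves the balance, so $\mathcal{B}_{t+1}$ is balanced. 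The base case $\mathcal{B}_1$ is handled by the identical argument with ``parity of $n+1$'' replaced by the opposite parity, and with $n+1$ trivially absent.

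The main obstacle is not Property~5 itself but the upstream Property~4, which is needed to guarantee (via Step~5) that $\mathcal{B}_{t+1}$ is a well-defined all-or-nothing selection of heights from $U_{t+1}$ and hence inherits the parity structure of $U_{t+1}$. That work is where Lemma~\ref{importantlemma} is invoked: Case~3 forces exactly one $\mathcal{B}_t$-height between consecutive heights of $U_{t+1}$, producing the parity flip. Once Properties 1--4 are available from the joint induction, Property~5 is essentially the combinatorial observation that a single height of the natural labelling is itself balanced, combined with the invariant that all heights in $\mathcal{B}_t$ share one parity.
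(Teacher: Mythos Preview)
Your proposal is correct and follows essentially the same approach as the paper: the paper argues that the first colouring removes all nucleotides of one type pair (all G/C or all A/U), so every subsequent $\mathcal{B}'$ is a union of complete heights $\ne n+1$ drawn from the remaining type pair and is therefore balanced by the natural labelling; your height-parity invariant on $U_t$ is just an explicit reformulation of that same observation. One small imprecision: balance at a height $h\le n$ does not actually rely on the alternation of labels (and the ``equal counts of the two labels'' claim fails at $h=0$); it follows more simply because every paired vertex, whether labelled AU, UA, GC, or CG, contributes exactly one nucleotide of each type in its pair.
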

\begin{prop}\label{prop6}
Elements of each balanced set $\mathcal{B}$ must all pair with one another in $M$.
\end{prop}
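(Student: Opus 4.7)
The plan is to prove all six properties simultaneously by induction on the number of completed iterations of Algorithm~1, with Properties~\ref{prop1}--\ref{prop5} being structural statements about $\mathcal{B}$ and the natural labelling, and Property~\ref{prop6} a consequence of the others together with the maximality of~$M$. Throughout, the key invariant to maintain is that the set of currently uncoloured (tagged) heights is a union of height levels all sharing the parity of~$n+1$, so that successive tagged heights differ by an even integer.

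For the base case, the initial $\mathcal{B}$ is defined by height parity, so Property~\ref{prop1} is immediate and Property~\ref{prop2} follows because $n+1$ has the excluded parity. Property~\ref{prop5} in the base case is a direct count on the natural labelling: each vertex strictly below height~$n+1$ contributes two labels forming either a GC/CG pair or an AU/UA pair, so every height below $n+1$ is internally balanced and the union of opposite-parity heights is balanced overall. Properties~\ref{prop3} and~\ref{prop4} are where Lemma~\ref{importantlemma} does the real work. For Property~\ref{prop3}, let $N_1, N_2$ be consecutive among the height-$k$ nucleotides of~$S$; Cases~1 and~2 of the lemma (both with $\ell = k$) give counts $2$ and $2^{h-k+1}$, both even, at every intermediate height $h$ in the tagged range, so the running difference changes by an even amount and tag-parity is preserved along the height level. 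For Property~\ref{prop4}, take consecutive tagged heights $k < k'$; by the invariant every height in $(k,k')$ lies in $\mathcal{B}$, and Cases~3 and~4 of Lemma~\ref{importantlemma} contribute an odd count at each such intermediate height, so the total count is the sum of $k'-k-1$ odd numbers, which has the same parity as $k'-k-1$. Since $k'-k$ is even by the invariant, this sum is odd and tag-parity flips as required.

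In the inductive step, suppose the six properties hold for the current~$\mathcal{B}$. The replacement $\mathcal{B}'$ defined in Step~5 consists of uncoloured nucleotides with a prescribed tag-parity, and Properties~\ref{prop1} and~\ref{prop3} for~$\mathcal{B}$ force this selection to be determined by height alone, giving Property~\ref{prop1} for~$\mathcal{B}'$. Property~\ref{prop2} for $\mathcal{B}'$ holds because Step~5 explicitly excludes the tag-parity of the height-$(n+1)$ nucleotide. Properties~\ref{prop3} and~\ref{prop4} for the new $\mathcal{B}'$ go through by the same appeal to Lemma~\ref{importantlemma}, using that $\mathcal{B}'$ is again a union of full height levels, together with the preservation of the parity invariant (which is automatic because the heights absorbed into $\mathcal{B}'$ and those that remain tagged both share the parity of~$n+1$). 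The main obstacle is Property~\ref{prop5} for~$\mathcal{B}'$: one must verify that the newly absorbed heights contain equal numbers of G and~C (or of A and~U) under the natural labelling, and this requires using the alternation of GC/CG (resp.\ AU/UA) from left to right together with the self-similarity of the full floral tree structure.

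Finally, Property~\ref{prop6} follows from Property~\ref{prop5} by a maximality argument. Since $\mathcal{B}$ is balanced, the natural labelling exhibits an arc set of $T$ in which every element of $\mathcal{B}$ is paired with another element of~$\mathcal{B}$, establishing that saturating $\mathcal{B}$ internally is achievable. Conversely, Properties~\ref{prop3} and~\ref{prop4} combined with the running-difference argument show that in any arc set an element of $\mathcal{B}$ can pair only with another element of $\mathcal{B}$, because an arc to an uncoloured nucleotide would have to enclose unequal numbers of the two nucleotide types in~$\mathcal{B}$, forcing an arc crossing with an internal arc of $\mathcal{B}$. Any failure to pair all of $\mathcal{B}$ internally therefore strictly decreases the arc count without any compensating gain, contradicting maximality of~$M$. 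I expect the induction verification of Property~\ref{prop5} to be the hard part, since it is precisely the step that requires tracking how the alternating GC/CG and AU/UA patterns of the natural labelling interact with the absorbed heights at each iteration, and it is the one that the authors have flagged as ``very delicate.''
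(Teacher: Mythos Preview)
Your overall plan---prove all six properties by simultaneous induction across iterations of Algorithm~1---is exactly the paper's approach, and your treatment of Properties~\ref{prop1}--\ref{prop4} is broadly in line with it.

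The gap is in your argument for Property~\ref{prop6}. You claim that ``in any arc set an element of $\mathcal{B}$ can pair only with another element of~$\mathcal{B}$, because an arc to an uncoloured nucleotide would have to enclose unequal numbers of the two nucleotide types in~$\mathcal{B}$, forcing an arc crossing with an internal arc of~$\mathcal{B}$.'' This is circular: the existence of ``internal arcs of~$\mathcal{B}$'' is precisely what Property~\ref{prop6} asserts, so you cannot invoke such arcs in the course of proving it. Moreover, the claim is simply false for non-maximal arc sets once $\mathcal{B}$ is a later-iteration set of A's and U's (or G's and C's) drawn from only some heights: nothing prevents an A in~$\mathcal{B}'$ from pairing with a U outside~$\mathcal{B}'$ in an arbitrary arc set. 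Properties~\ref{prop3} and~\ref{prop4} concern tags on \emph{uncoloured} nucleotides and do not by themselves constrain which partners an element of~$\mathcal{B}$ may take.

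The paper repairs this by making the inductive dependency explicit. For the initial~$\mathcal{B}$, elements can pair only with one another simply by the Watson--Crick rules (G pairs only with~C, or A only with~U). For the inductive step, the paper first invokes Property~\ref{prop6} for the \emph{previous}~$\mathcal{B}$: since those elements must pair among themselves in~$M$, any arc between two nucleotides outside the previous~$\mathcal{B}$ must enclose equal numbers of its two types (else it crosses one of those forced arcs). This running-difference constraint shows that elements of~$\mathcal{B}'$, all sharing one tag-parity, \emph{can} pair only with one another. Then maximality enters separately: the natural labelling witnesses an arc set leaving only one nucleotide type unpaired, so in~$M$ the other three types are fully paired; since~$\mathcal{B}'$ is balanced (Property~\ref{prop5}) and its elements can pair only internally, they \emph{must} all pair internally. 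Your sketch conflates these two steps and never actually uses the inductive hypothesis for Property~\ref{prop6}.

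A minor point: you flag Property~\ref{prop5} as the delicate part, but it is in fact immediate. Each height $h\le n$ carries $2^h$ two-dot vertices labelled alternately XY/YX under the natural labelling, so every such height is internally balanced and any union of full heights below $n+1$ is balanced. The paper dispatches Property~\ref{prop5} in two sentences; the delicacy the authors refer to lies in the global architecture, not in this step.
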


Assume Properties \ref{prop1} to \ref{prop6} hold, and consider how Algorithm 1 terminates at Step 3. Step~2 colours all nucleotides in set $\mathcal{B}$, which by Property \ref{prop1} comprises all nucleotides at one or more depths. Therefore each application of Step 2 strictly decreases the number of uncoloured depths. This number is bounded below by 1 because, by Property \ref{prop2}, the nucleotides at depth $n+1$ are never coloured. Therefore eventually this number is reduced to 1 at Step 2, at which point only the nucleotides at depth $n+1$ are uncoloured and Algorithm 1 immediately terminates at Step 3. At this point, all nucleotides at other depths have been coloured (at some iteration of Step 2), and so belong to some balanced set by Property~\ref{prop5}; by Property \ref{prop6} these nucleotides must all pair with one another in $M$. The nucleotides at depth $n+1$ all have the same type (A or C) and so cannot pair with one another; therefore they remain unpaired in $M$, establishing Claim 1.

To complete the proof of Claim 1, we must prove Properties \ref{prop1} to \ref{prop6}.\\

We first show that Properties \ref{prop1} to \ref{prop4} hold for the initial set $\mathcal{B}$ (defined in Step 1).

Property \ref{prop1}: The initial set $\mathcal{B}$ is all nucleotides at depths whose parity is opposite to that of $n+1$.

Property \ref{prop2}: All nucleotides at depth $n+1$ lie outside $\mathcal{B}$ and so are uncoloured.

Properties \ref{prop3} and \ref{prop4}: From Step 4, the tagged nucleotides are exactly the uncoloured nucleotides and so occur at depths whose parity is the same as that of $n+1$. Let $N_1$ be an uncoloured nucleotide, and suppose another uncoloured nucleotide follows $N_1$ in $S$. Let $N_2$ be the first such uncoloured nucleotide. We may assume that at least one nucleotide from $\mathcal{B}$ occurs between $N_1$ and $N_2$ in $S$ (otherwise $N_1$ and $N_2$ occur at the same depth and receive identical tags with respect to $\mathcal{B}$).

Let $N_1$ occur at depth $k$ and $N_2$ at depth $\ell$. By Property \ref{prop1} for $\mathcal{B}$, all nucleotides at depth $\ell$ remain uncoloured at Step 2, and so $N_2$ is the first nucleotide at depth $\ell$ following $N_1$ in $S$. Observation \ref{importantobs} implies that $\ell\in\left\{{k-2, k, k+2}\right\}$, and all nucleotides in $\mathcal{B}$ occurring in the subsequence of $S$ starting at $N_1$ and ending at $N_2$ occur at a single depth $h\in\left\{{k-1, k+1}\right\}$.

The possibilities for $(\ell, h)$ are $(k, k-1), (k, k+1), (k+2, k+1),$ and $(k-2, k-1)$. By definition of $\mathcal{B}$ we have $h\not= n+1$ (which in fact implies that the case $(\ell, h)=(k, k+1)$ cannot occur). Then by Lemma \ref{importantlemma}, the number of nucleotides in $\mathcal{B}$ occurring in the subsequence of $S$ starting at $N_1$ and ending at $N_2$ is even if $\ell=k$ and is odd if $\ell\not=k$. Therefore the tags with respect to $\mathcal{B}$ of all uncoloured nucleotides at a given depth share the same parity, and the parity alternates between successive tagged depths.\\

We now assume Properties \ref{prop1} to \ref{prop4} hold for the current set $\mathcal{B}$, and show that they hold for the set $\mathcal{B'}$ defined in Step 5.

Property \ref{prop1}: By Properties~\ref{prop2} and \ref{prop3} for $\mathcal{B}$, all nucleotides at depth $n+1$ are tagged with respect to $\mathcal{B}$ and have the same parity. So $\mathcal{B'}$ comprises all tagged nucleotides whose running difference with respect to $\mathcal{B}$ has opposite parity to that of all the nucleotides at depth $n+1$. Then by Property \ref{prop3} for $\mathcal{B}$, we see that Property \ref{prop1} holds for $\mathcal{B'}$.

Property \ref{prop2}: nucleotides at depth $n+1$ do not belong to $\mathcal{B'}$ (from Step 5) and so are not coloured (in Step 2).

Properties \ref{prop3} and \ref{prop4}: We represent the application of Steps 2 and 4 to $\mathcal{B'}$ by the following sequence of ``condensed trees''.

\begin{figure}[H] 
\centering
\vspace{-0.3cm}
\hspace{-1cm}
\subfloat[\hspace{2.6cm}part~(a)]{
\begin{minipage}{0.14\textwidth} 
\begin{tikzpicture}[scale=0.41,auto=left]
\node[scale=1.5](arrow) at (8,9) {$\rightarrow$};
\node[circle, scale=0.7, draw, thick] (n01) at (1,1) {$\bullet$} ;
\node[right of=n01] {$p$};
\node[] at (-3, 1) {Depth $n+1$ $\rightarrow$};
\node[circle, scale=0.7,  draw](n11) at (1,3)  { };
\node[left of=n11] {};
\node[right of=n11] {$1-p$};
\node[circle, scale=0.7, draw](n21) at (1,5)  { };
\node[left of=n21] {};
\node[right of=n21] {$p$};
\node[circle, scale=0.7,  draw](n31) at (1,7)  { };
\node[left of=n31] {};
\node[right of=n31] {$1-p$};
\node[circle, scale=0.7, draw](n41) at (1,9)  { };
\node[left of=n41] {};
\node[right of=n41] {$p$};
\node[circle, scale=0.7,  draw](n51) at (1,11)  { };
\node[left of=n51] {};
\node[right of=n51] {$1-p$};
\node[circle, scale=0.7,  draw](n61) at (1,13)  { };
\node[left of=n61] {};
\node[right of=n61] {$p$};
\node[circle, scale=0.7, draw](n71) at (1,15)  { };
\node[left of=n71] {};
\node[right of=n71] {$1-p$};

\foreach \from/\to in {n01/n11, n11/n21, n21/n31, n31/n41, n41/n51, n51/n61, n61/n71}
\draw [dashed] (\from) -- (\to);
\draw [dashed] (n71) -- (1, 17);

\end{tikzpicture}
\end{minipage}
}
\hspace{4cm}
\subfloat[part~(b)]{
\begin{minipage}{0.14\textwidth} 
\begin{tikzpicture}[scale=0.41,auto=left]
\node[scale=1.5](arrow) at (7,9) {$\rightarrow$};
\node[circle, scale=0.7, draw, thick] (n01) at (1,1) {$\bullet$} ;
\node[left of=n01] {};
\node[right of=n01] {};
\node[circle, scale=0.7,fill=red!30, draw](n11) at (1,3)  { };
\node[left of=n11] {};
\node[right of=n11] {};
\node[circle, scale=0.7, draw](n21) at (1,5)  { };
\node[left of=n21] {};
\node[right of=n21] {};
\node[circle, scale=0.7,fill=red!30,  draw](n31) at (1,7)  { };
\node[left of=n31] {};
\node[right of=n31] {};
\node[circle, scale=0.7, draw](n41) at (1,9)  { };
\node[left of=n41] {};
\node[right of=n41] {};
\node[circle, scale=0.7,fill=red!30,  draw](n51) at (1,11)  { };
\node[left of=n51] {};
\node[right of=n51] {};
\node[circle, scale=0.7,  draw](n61) at (1,13)  { };
\node[left of=n11] {};
\node[right of=n11] {};
\node[circle, scale=0.7,fill=red!30, draw](n71) at (1,15)  { };
\node[left of=n21] {};
\node[right of=n21] {};

\foreach \from/\to in {n01/n11, n11/n21, n21/n31, n31/n41, n41/n51, n51/n61, n61/n71}
\draw [dashed] (\from) -- (\to);
\draw [dashed] (n71) -- (1, 17);

\end{tikzpicture}
\end{minipage}
}
\hspace{2cm}
\subfloat[part~(c)]{
\begin{minipage}{0.14\textwidth} 
\begin{tikzpicture}[scale=0.41,auto=left]
\node[circle, scale=0.7, draw, thick] (n01) at (1,1) {$\bullet$} ;
\node[left of=n01] {};
\node[right of=n01] {$p'$};
\node[circle, scale=0.7,fill=red!30, draw](n11) at (1,3)  { };
\node[left of=n11] {};
\node[right of=n11] {};
\node[circle, scale=0.7, draw](n21) at (1,5)  { };
\node[left of=n21] {};
\node[right of=n21] {$1-p'$};
\node[circle, scale=0.7,fill=red!30,  draw](n31) at (1,7)  { };
\node[left of=n31] {};
\node[right of=n31] {};
\node[circle, scale=0.7, draw](n41) at (1,9)  { };
\node[left of=n41] {};
\node[right of=n41] {$p'$};
\node[circle, scale=0.7,fill=red!30,  draw](n51) at (1,11)  { };
\node[left of=n51] {};
\node[right of=n51] {};
\node[circle, scale=0.7,  draw](n61) at (1,13)  { };
\node[left of=n61] {};
\node[right of=n61] {$1-p'$};
\node[circle, scale=0.7,fill=red!30, draw](n71) at (1,15)  { };
\node[left of=n71] {};
\node[right of=n71] {};

\foreach \from/\to in {n01/n11, n11/n21, n21/n31, n31/n41, n41/n51, n51/n61, n61/n71}
\draw [dashed] (\from) -- (\to);
\draw [dashed] (n71) -- (1, 17);

\end{tikzpicture}
\end{minipage}
}
\caption{Application of Steps 2 and 4 to new balanced set $\mathcal{B}'$}
\label{steps24}
\end{figure}

Figure~\ref{steps24}(a) represents the tree prior to applying Step 5 to produce $\mathcal{B'}$; only the depths containing nucleotides that are tagged with respect to $\mathcal{B}$ (and which are necessarily uncoloured) are shown. (Previously coloured nucleotides may occur at other depths not shown.) By Property~\ref{prop1} for~$\mathcal{B}$, all nucleotides at a given such depth may be represented by a single vertex. By Property~\ref{prop3} for~$\mathcal{B}$, each such vertex may be assigned a single parity ($p$ or $1-p$), representing the parity of the tag shared by all nucleotides at that depth. By Properties~\ref{prop2} and \ref{prop4} for~$\mathcal{B}$, the nucleotides at depth $n+1$ appear as a tagged vertex in the condensed tree, say with parity $p$, and the parity of vertices at successive tagged depths alternates. The set $\mathcal{B'}$ therefore comprises all nucleotides at depths tagged with parity $1-p$. These depths are coloured in Figure~\ref{steps24}(b) to indicate that the nucleotides at these depths are coloured when Step 2 is applied to~$\mathcal{B'}$.

Figure~\ref{steps24}(c) shows the parity of the tags assigned to nucleotides when Step 4 is applied to~$\mathcal{B'}$, as we now describe; this establishes Properties \ref{prop3} and \ref{prop4} for $\mathcal{B'}$. The uncoloured nucleotides are tagged in Step 4 by the running difference with respect to $\mathcal{B'}$. Let $N_1$ be an uncoloured nucleotide, and suppose another uncoloured nucleotide follows $N_1$ in $S$. Let $N_2$ be the first such uncoloured nucleotide. We may assume that at least one nucleotide from $\mathcal{B}'$ occurs between $N_1$ and $N_2$ in $S$ (otherwise $N_1$ and $N_2$ occur at the same depth and receive identical tags with respect to $\mathcal{B}'$).

Let $N_1$ occur at depth $k$ and $N_2$ at depth $\ell$. By Property \ref{prop1} for $\mathcal{B}'$, all nucleotides at depth~$\ell$ remain uncoloured at Step 2, and so $N_2$ is the first nucleotide at depth $\ell$ following $N_1$ in $S$. Observation \ref{importantobs} shows that $k$ and $\ell$ are either the same depth or are successive uncoloured depths. In both cases, all nucleotides in $\mathcal{B}'$ occurring in the subsequence of $S$ starting at $N_1$ and ending at~$N_2$ occur at a single depth $h$ (see Figure~\ref{steps24}(b)).

In order to apply Lemma \ref{importantlemma}, we verify the appropriate conditions on $k, h,$ and $\ell$. By Property~\ref{prop1} for $\mathcal{B}'$, all nucleotides at depth $h$ are in $\mathcal{B}'$ (coloured at Step 2 for $\mathcal{B}'$) and all nucleotides at depths $k$ and $\ell$ are not in $\mathcal{B}'$ (uncoloured). Therefore $h\not\in\left\{{k, \ell}\right\}$, and by Property \ref{prop2} for $\mathcal{B}'$ we have $h\not=n+1$. By Observation \ref{importantobs}, if $k\not=\ell$ then either $k<h<\ell$ or $\ell<h<k$. 

Then by Lemma \ref{importantlemma}, the number of nucleotides in $\mathcal{B}'$ occurring in the subsequence of $S$ starting at $N_1$ and ending at $N_2$ is even if $\ell=k$ and is odd if $\ell\not=k$. Therefore the tags with respect to~$\mathcal{B}'$ of all uncoloured nucleotides at a given depth share the same parity, and the parity alternates between successive tagged depths.

This completes the proof that Properties \ref{prop1} to \ref{prop4} hold for Algorithm 1.\\

We next show that Properties \ref{prop5} and \ref{prop6} hold for the initial set $\mathcal{B}$ (defined in Step 1).

Property \ref{prop5}: The initial set $\mathcal{B}$ comprises all G and C nucleotides if $n+1$ is even, or all A and U nucleotides if $n+1$ is odd, and so is balanced (by definition of the natural labelling).

Property \ref{prop6}: The labelled tree $\tau$ from which the sequence $S$ is derived shows that $S$ admits an arc set in which only nucleotides of a single type (A if $n+1$ is even, or C if $n+1$ is odd) are unpaired; therefore in the maximum-size arc set $M$ all nucleotides from the initial set $\mathcal{B}$ must pair with one another.\\

We now assume Properties \ref{prop5} and \ref{prop6} hold for the current set $\mathcal{B}$, and show that they hold for $\mathcal{B'}$.

Property \ref{prop5}: The initial application of Step 2 colours either all G and C nucleotides, or all A and U nucleotides. Since colouring is never removed, the set $\mathcal{B'}$ comprises only (a subset of the) A and U nucleotides, or G and C nucleotides, respectively. Furthermore, the set $\mathcal{B'}$ comprises all nucleotides occurring at some subset of depths not including depth $n+1$ (see Figure~\ref{steps24}(b)). Therefore, from the definition of the natural labelling, the nucleotides at this subset of depths form a new balanced~set.

Property \ref{prop6}: The elements of the balanced set $\mathcal{B}$ must all pair with one another, by Property \ref{prop6}. So each arc joining two nucleotides not in $\mathcal{B}$ must enclose an equal number of the two nucleotide types in $\mathcal{B}$ (otherwise the arc will induce a crossing). Therefore nucleotides tagged in Step 4 with the same running difference with respect to $\mathcal{B}$ can pair only with one another. In particular, nucleotides tagged in Step 4 whose running differences with respect to $\mathcal{B}$ have the same parity can pair only with one another. It follows that the elements of the set $\mathcal{B'}$ defined in Step 5 (whose tags with respect to $\mathcal{B}$ all share the same parity) can pair only with one another.

Now the labelled tree $\tau$ from which the sequence $S$ is derived shows that $S$ admits an arc set in which only nucleotides of a single type (A or C) are unpaired; therefore in the maximum-size arc set $M$ all nucleotides of the other three types must be paired. By Property \ref{prop5} we know that $\mathcal{B'}$ forms a balanced set, and we have shown that its elements can pair only with one another. Therefore the elements of $\mathcal{B'}$ must all pair with one another.

This completes the proof that Properties \ref{prop5} and \ref{prop6} hold for Algorithm 1.
\end{proof}

\begin{proof}[Proof of Claim 2.] Fix $j$ satisfying $0 \leq j \leq n$. We shall apply Algorithm 1 to $\widehat{\tau}$ with $n+1$ replaced throughout by $j$. We shall show that Properties \ref{prop1} to \ref{prop6} hold for this modified algorithm, again with $n+1$ replaced by $j$. By the same argument as previously, these properties imply that all nucleotides at depths other than $j$ must pair with one another in $M$. The nucleotides at depth $j$ can therefore pair only with one another in $M$, and the existence of $\widehat{\tau}$ (in which every nucleotide is paired) then shows that the nucleotides at depth $j$ must all pair with one another in $M$, proving Claim 2.

In order to establish that Properties \ref{prop1} to \ref{prop6} hold for the modified algorithm, we highlight only the places in which the argument differs from that given previously.

Changes throughout:
\begin{addmargin}[4em]{4em}
Replace $n+1$ by $j$, $\tau$ by $\widehat{\tau}$, and $S$ by $\widehat{S}$.
\end{addmargin}

Changes to the proof that Properties \ref{prop3} and \ref{prop4} hold for $\mathcal{B'}$:
\begin{addmargin}[4em]{4em}
Replace Figure~\ref{steps24} by Figure~\ref{rep-steps24}.
\end{addmargin}

\begin{figure}[H]
\centering
\vspace{-0.3cm}
\hspace{-1cm}
\subfloat[\hspace{1.8cm}part~(a)]{
\begin{minipage}{0.14\textwidth} 
\begin{tikzpicture}[scale=0.41,auto=left]
\node[scale=1.5](arrow) at (8,9) {$\rightarrow$};
\node[circle, scale=0.7, draw] (n01) at (1,1) {} ;
\node[left of=n01] {};
\node[right of=n01] {$1-p$};
\node[circle, scale=0.7,  draw](n11) at (1,3)  {};
\node[left of=n11] {};
\node[right of=n11] {$p$};
\node[circle, scale=0.7, draw](n21) at (1,5)  {};
\node[left of=n21] {};
\node[right of=n21] {$1-p$};
\node[circle, scale=0.7,  draw, thick](n31) at (1,7)  {$\bullet$};
\node[left of=n31] {};
\node[right of=n31] {$p$};
\node[] at (-2, 7) {Depth $j$ $\rightarrow$};
\node[circle, scale=0.7, draw](n41) at (1,9)  {};
\node[left of=n41] {};
\node[right of=n41] {$1-p$};
\node[circle, scale=0.7,  draw](n51) at (1,11)  {};
\node[left of=n51] {};
\node[right of=n51] {$p$};
\node[circle, scale=0.7,  draw](n61) at (1,13)  {};
\node[left of=n61] {};
\node[right of=n61] {$1-p$};
\node[circle, scale=0.7, draw](n71) at (1,15)  {};
\node[right of=n71] {$p$};

\foreach \from/\to in {n01/n11, n11/n21, n21/n31, n31/n41, n41/n51, n51/n61, n61/n71}
\draw [dashed] (\from) -- (\to);
\draw [dashed] (n01) -- (1, -1);
\draw [dashed] (n71) -- (1, 17);

\end{tikzpicture}
\end{minipage}
}
\hspace{3cm}
\subfloat[\hspace{0.5cm}part~(b)]{
\begin{minipage}{0.14\textwidth} 
\begin{tikzpicture}[scale=0.41,auto=left]
\node[scale=1.5](arrow) at (7,9) {$\rightarrow$};
\node[circle, scale=0.7,fill=red!30, draw] (n01) at (1,1) {} ;
\node[left of=n01] {};
\node[right of=n01] {};
\node[circle, scale=0.7,  draw](n11) at (1,3)  {};
\node[left of=n11] {};
\node[right of=n11] {};
\node[circle, scale=0.7,fill=red!30, draw](n21) at (1,5)  {};
\node[left of=n21] {};
\node[right of=n21] {};
\node[circle, scale=0.7,  draw, thick](n31) at (1,7)  {$\bullet$};
\node[left of=n31] {};
\node[right of=n31] {};
\node[circle, scale=0.7,fill=red!30, draw](n41) at (1,9)  {};
\node[left of=n41] {};
\node[right of=n41] {};
\node[circle, scale=0.7,  draw](n51) at (1,11)  {};
\node[left of=n51] {};
\node[right of=n51] {};
\node[circle, scale=0.7,fill=red!30,  draw](n61) at (1,13)  {};
\node[left of=n61] {};
\node[right of=n61] {};
\node[circle, scale=0.7, draw](n71) at (1,15)  {};
\node[] at (-2, 15) {};
\node[right of=n71] {};

\foreach \from/\to in {n01/n11, n11/n21, n21/n31, n31/n41, n41/n51, n51/n61, n61/n71}
\draw [dashed] (\from) -- (\to);
\draw [dashed] (n01) -- (1, -1);
 \draw [dashed] (n71) -- (1, 17);

\end{tikzpicture}
\end{minipage}
}
\hspace{2cm}
\subfloat[\hspace{0.5cm}part~(c)]{
\begin{minipage}{0.14\textwidth} 
\begin{tikzpicture}[scale=0.41,auto=left]
\node[circle, scale=0.7,fill=red!30, draw] (n01) at (1,1) {} ;
\node[left of=n01] {};
\node[right of=n01] {};
\node[circle, scale=0.7,  draw](n11) at (1,3)  {};
\node[left of=n11] {};
\node[right of=n11] {$1-p'$};
\node[circle, scale=0.7,fill=red!30, draw](n21) at (1,5)  {};
\node[left of=n21] {};
\node[right of=n21] {};
\node[circle, scale=0.7,  draw, thick](n31) at (1,7)  {$\bullet$};
\node[left of=n31] {};
\node[right of=n31] {$p'$};
\node[circle, scale=0.7,fill=red!30, draw](n41) at (1,9)  {};
\node[left of=n41] {};
\node[right of=n41] {};
\node[circle, scale=0.7,  draw](n51) at (1,11)  {};
\node[left of=n51] {};
\node[right of=n51] {$1-p'$};
\node[circle, scale=0.7,fill=red!30,  draw](n61) at (1,13)  {};
\node[left of=n61] {};
\node[right of=n61] {};
\node[circle, scale=0.7, draw](n71) at (1,15)  {};
\node[] at (-2, 15) {};
\node[right of=n71] {$p'$};

\foreach \from/\to in {n01/n11, n11/n21, n21/n31, n31/n41, n41/n51, n51/n61, n61/n71}
\draw [dashed] (\from) -- (\to);
\draw [dashed] (n01) -- (1, -1);
\draw [dashed] (n71) -- (1, 17);

\end{tikzpicture}
\end{minipage}
}
\caption{ }
\label{rep-steps24}
\end{figure}

Changes to the proof that Property \ref{prop6} holds for the initial set $\mathcal{B}$:
\begin{addmargin}[4em]{4em}
The labelled tree $\widehat{\tau}$ from which the sequence $\widehat{S}$ is derived shows that $\widehat{S}$ admits an arc set in which all nucleotides are paired; therefore in the maximum-size arc set $M$ all nucleotides from the initial set $\mathcal{B}$ (comprising all G and C nucleotides or all A and U nucleotides) must pair with one another.
\end{addmargin}

Changes to the proof that Property \ref{prop6} holds for $\mathcal{B'}$:
\begin{addmargin}[4em]{4em}
As previously, the elements of $\mathcal{B'}$ can pair only with one another. The labelled tree $\widehat{\tau}$ from which the sequence $\widehat{S}$ is derived shows that $\widehat{S}$ admits an arc set in which all nucleotides are paired; therefore in the maximum-size arc set $M$ all nucleotides must be paired. Therefore the elements of $\mathcal{B'}$ must all pair with one another.
\end{addmargin}
\end{proof}

Claims 1 and 2 have now been established and so the proof of Theorem \ref{main} is complete.

\section{Open Questions}
\label{sec:future}

\begin{enumerate}[Q1.]
\item 
Theorem~\ref{main} shows the existence of at least one design for the secondary structure corresponding to a P-unsaturated perfect floral tree $T$. How many such designs are there?

\item 
Theorems~\ref{old} and~\ref{tertiary} identify obstructions whose presence in a rooted tree prevents the corresponding secondary structure from being designable. Can other obstructions be identified? Can it be shown that the number of minimal obstructions is finite?

\item
Theorem~\ref{tertiary} shows that the secondary structures corresponding to rooted unsaturated trees are not designable in general, although Corollary~\ref{prune} shows that those corresponding to a subclass of rooted unsaturated trees are. Is the secondary structure corresponding to a rooted unsaturated binary tree always designable?

\item 
In Section~\ref{sec:results} we showed that Theorems~\ref{sepcol} and~\ref{main} can each be used to establish the designability of a secondary structure that the other cannot. Can these criteria be subsumed into a more general result?

\item 
Hale{\v{s}} et al.\ \cite{halevs2016combinatorial} proposed the combinatorial RNA design problem as an idealized version of the RNA design problem, in the expectation that it would produce algorithmic insights applicable to more sophisticated models (see Section~\ref{sec:intro}). As a next step, how would the results of this paper change if, instead of seeking sequences which admit a unique maximum-size arc set, we allowed sequences to admit at most two maximum-size arc sets?

\item 
As discussed in Section~\ref{sec:intro}, the practical applicability of our results is limited by the restricted class of sequences and the energy model used. To what extent can the methods of this paper be extended to investigate the designability of wider classes of sequences using more realistic energy scores?
\end{enumerate}

\section*{Acknowledgements}
Our research was carried out on the unceded Aboriginal territories of the Coast Salish people, including the Musqueam, Tsleil-Waututh, and Squamish First Nations.
Thanks to Ladislav Stacho for introducing us to this problem, to Marni Mishna for detailed comments on the manuscript, and to Stefan Hannie for interesting discussions. 
We appreciate Yann Ponty's valuable assistance in giving helpful feedback on our results and methods, providing careful responses to our questions, and suggesting Corollary~\ref{GUcorollary} and the comparison in Section~\ref{sec:results} between Theorems~\ref{sepcol} and~\ref{main}.
We are grateful to the reviewers for their very careful reading of the paper and for several constructive suggestions for improvement.


\end{document}